\newtheorem{thm}{Theorem}[section]
\newtheorem{theorem}[thm]{Theorem}
\newtheorem{constr}[thm]{Construction}
\newtheorem{lemma}[thm]{Lemma}
\newtheorem{prop}[thm]{Proposition}
\newtheorem{corollary}[thm]{Corollary}
\theoremstyle{definition}
\newtheorem{defn}[thm]{Definition}
\newtheorem*{theorem*}{Theorem}
\newtheorem*{problem*}{Problem}
\newtheorem*{corollarynonu}{Corollary}
\newenvironment{customcor}[1]
  {\innercustomcor}
  {\endinnercustomcor}
\def\R{\mathbb{R}}
\def\Z{\mathbb{Z}}
\def\res{\raise-.5ex\hbox{\ensuremath|}}
\newcommand{\customlabel}[2]{%
   \protected@write \@auxout {}{\string \newlabel {#1}{{#2}{\thepage}{#2}{#1}{}} }%
   \hypertarget{#1}{#2}
}
\def\@cite#1#2{{\normalfont[{#1\if@tempswa , #2\fi}]}}
\DeclareMathOperator{\Img}{\mathrm{im}}
\DeclareMathOperator{\Spin}{\mathrm{Spin}}
\DeclareMathOperator{\Arf}{\mathrm{Arf}}
\DeclareMathOperator{\Hom}{\mathrm{Hom}}
\DeclareMathOperator{\Map}{\mathrm{Map}}
\DeclareMathOperator{\Mod}{\mathrm{Mod}} 
\DeclareMathOperator{\Diff}{\mathrm{Diff}}
\title[]{A surgery approach to abelian quotients of the level 2 congruence group and the Torelli group}
\author[]{Tudur Lewis}
\begin{document}
\maketitle
\begin{abstract}
    We provide algorithms for computing the Rochlin invariants of mod 2 homology spheres and mapping tori. This provides a unified framework for studying two families of maps: the Birman–Craggs maps of the Torelli group, and Sato’s maps of the level 2 congruence subgroup of the mapping class group. Our framework gives new, elementary proofs that both families of maps are homomorphisms, gives an explicit method for evaluating these maps on Dehn twists, and relates the two families when restricted to the Torelli group. It also gives a relation between an extension of the Birman--Craggs maps to the level 2 congruence subgroup, and Meyer's signature cocycle. Our methods involve 3--manifold techniques, and do not depend on results in 4--manifold theory as in the original constructions. 
\end{abstract}

\section{Introduction}
\subsection*{Background}
Let $\Sigma_{g,r}$ denote an oriented surface of genus $g$ with $r$ boundary components, and let $\Mod_{g,r} = \pi_0(\Diff^+(\Sigma_{g,r}, \partial \Sigma_{g,r}))$ denote the mapping class group of $\Sigma_{g,r}$. Let $\mathcal{I}_{g,r}$ denote the \textit{Torelli group}, that is, the kernel of the action of $\Mod_{g,r}$ on $H_1(\Sigma_{g,r}; \mathbb{Z})$. When $r=0$, we abbreviate $\Sigma_{g,0}$ to $\Sigma_g$.

The Torelli group arises in algebraic geometry as the fundamental group of Torelli space, the moduli space of genus $g$ Riemann surfaces $C$ with one boundary component and a symplectic basis of $H_1(C;\mathbb{Z})$. The abelianization of $\mathcal{I}_{g,1}$ is the first homology of Torelli space, and was calculated by Johnson in a series of papers. Johnson's work is of interest in the theory of $3$-- and $4$--manifolds. For example, Morita showed that the tools developed by Johnson have deep implications for the topology of homology $3$--spheres \autocite[Prop.2.3]{morita}. More recently, Lambert--Cole used Johnson's tools to study intersection forms of smooth $4$--manifolds, via trisections \autocite{cole}.

Johnson found that all torsion in the abelianization of $\mathcal{I}_{g,1}$ is characterised by the \textit{Birman--Craggs homomorphisms}, which are a family of maps $\mu_h: \mathcal{I}_{g,1} \rightarrow \mathbb{Z}/2$, indexed by a Heegaard embedding $h: \Sigma_g \rightarrow S^3$. These maps are constructed by cutting $S^3$ along the Heegaard surface $h(\Sigma_g)$, regluing the two handlebodies via an element of $\mathcal{I}_{g,1}$ to get a homology sphere, and then taking the Rochlin invariant of its unique spin structure \autocite{johnsonab},\autocite{birmancraggs}, \autocite{BCJpaper}. 

Let $M$ be an oriented Riemannian manifold, and let $P(M)$ denote the oriented orthonormal frame bundle of $M$. A \textit{spin structure} on $M$ is an element $\xi \in H^1(P(M);\Z/2) = \Hom(H_1(P(M)),\Z/2)$ that evaluates to $1$ on any homotopically non--trivial loop in a fibre of $P(M)$. The \textit{Rochlin invariant} of a spin $3$-manifold $M$ is defined as the signature modulo $16$ of any spin $4$-manifold spin bounding $M$; see Section \ref{sato_overview_section}. Let $\Spin(M)$ denote the set of spin structures on $M$, then $\Spin(M)$ is in bijection with $H^1(M;\Z/2)$.

Let $\Mod_{g,1}[L]$ denote the \textit{level $L$ congruence subgroup} of the mapping class group, that is, the kernel of the action of $\Mod_{g,1}$ on $H_1(\Sigma_{g,1} ; \mathbb{Z}/L)$, note that $\mathcal{I}_{g,1} < \Mod_{g,1}[L]$. The subgroup $\Mod_{g,1}[L]$ arises in algebraic geometry as the orbifold fundamental group of the moduli space of genus $g$ Riemann surfaces with one boundary component and a level $L$ structure. Farb posed the fundamental question of computing the abelianizations of these subgroups \autocite[Problem 5.23]{farbproblemlist}.

Our focus is on the \textit{level $2$ subgroup} $\Mod_{g,1}[2]$; this case often requires separate techniques. The level $2$ subgroup is of particular interest in the theory of $3$--manifolds due to its connections with rational homology spheres \autocite[Cor.1.1]{pitschriba}. To compute the abelianization of $\Mod_{g,1}[2]$, Sato defines a family of maps to abelian groups, using a construction that is similar to the Birman--Craggs maps \autocite[Part II]{sato}; the natural extension of the Birman--Craggs maps to $\Mod_{g,1}[2]$ is no longer a homomorphism \autocite[p.284]{birmancraggs}. Sato defines an analogous construction using mapping tori instead. Mapping tori have many spin structures, so Sato constructs a spin structure on the mapping torus $M_f$ of $[f] \in \Mod_{g,1}[2]$ that depends on a fixed spin structure $\sigma \in \Spin(\Sigma_g)$ for the fiber. Sato's maps $\beta_{\sigma, x} :\Mod_{g,1}[2] \rightarrow \mathbb{Z}/8$ are given by taking the Rochlin invariant of the spin mapping tori obtained from his construction; see Section \ref{sato_overview_section}.

Sato shows that the maps $\beta_{\sigma, x}$ give homomorphisms by using results on the signature of $4$-manifolds, such as Rochlin's theorem and Novikov additivity. A direct sum of a certain subfamily of the $\beta_{\sigma, x}$ was enough to compute the abelianization of $\Mod_{g,1}[2]$ \autocite[Lem.2.2, Prop.5.2 and 7.1]{sato}. His computation has applications in algebraic geometry; Putman built on Sato's work and computed the Picard groups of moduli spaces of curves with level structures in many cases; see \autocite{putmanduke} for a more algebraic approach to computing abelianizations of congruence subgroups.

\subsection*{Outline and main results}
Section \ref{sato_overview_section} overviews spin structures and Sato's definition of his maps $\beta_{\sigma, x}$. The main steps in our reconstruction of both maps is in Sections \ref{surgery_diagram_construction_section}, \ref{sato_new_def_section}, and \ref{birman_craggs_section}. Constructions \ref{constr_1}, \ref{spin_constr1}, and \ref{bc_extension_constr}, along with Theorem \ref{kirby_melvin_rochlin_formula} give an algorithm for computing Rochlin invariants of mapping tori and $\Z/2$--homology spheres.

In Section \ref{surgery_diagram_construction_section} we use framed links in $S^3$ and ribbon graphs to describe an algorithm that gives framed link diagrams of mapping tori and Heegaard splittings. This uses the $3$--manifold constructions in \autocite[Section 4]{ReshTur2}, and generalises the constructions in \autocite[Appendix]{kmtorusbundle} to higher genus. For all framed links obtained from our algorithm, fiber surfaces for the mapping tori (resp. Heegaard surfaces for the Heegaard splittings) lie in these surgery diagrams as the standard embedding of a surface into $S^3$. See Construction \ref{constr_1} for a summary of the algorithm.

In Section \ref{sato_new_def_section} we give a new definition of Sato's maps $\theta: \Spin(\Sigma_g) \rightarrow \Spin(M_f)$, where $[f] \in \Mod_{g,1}[2]$, and $M_f$ denotes the mapping torus of the map $f: \Sigma_g \rightarrow \Sigma_g$. Here, we fix a disk $D \subset \Sigma_g$, and think of representatives of elements in $\Mod_{g,1}[2]$ as diffeomorphisms of $\Sigma_g$ fixing $D$ pointwise. We begin with a framed link $L$ representing $S^1 \times \Sigma_g$. Using Construction \ref{constr_1}, we obtain a framed link $L_f$, containing $L$, representing $M_{f}$. Let $M_{L_f}$ denote the mapping torus obtained by Dehn surgery along $L_f$ in $S^3$. Then $M_{L_f}$ has a fixed embedding $\Sigma_g \hookrightarrow M_{L_f}$ representing a fiber surface, and a fixed embedding $S^1 \times D \hookrightarrow M_{L_f}$. For a spin structure $\sigma$ on $\Sigma_g$, we use Construction \ref{spin_constr1} to obtain spin structure $\theta_{L_f}(\sigma)$ on $M_{L_f}$. This spin structure is characterised by the fact that it restricts to $\sigma$ on the fiber surface, and restricts to a fixed spin structure on the embedding of $S^1 \times D$. This spin structure on $M_{L_f}$ gives an obstruction class $\omega_2(W_{L_f},s)$ that corresponds to a characteristic sublink $C$ of $L_f$, characterised by the condition $C \cdot L_i = L_i \cdot L_i \pmod{2}$ for all components $L_i$ of $L_f$; see Definition \ref{ffu} and Lemma \ref{char_sublink_bijection_spin}.

We give a new definition of Sato's maps $\beta_{\sigma, x}$ in terms of the characteristic sublinks obtained from our map $\theta_{L_f}$, using a formula for the Rochlin invariant found in \autocite[Appendix C.3]{kirbymelvin1}. Our definition involves the Arf invariant; the Arf invariant is a $\mathbb{Z}/2$-valued invariant of knots in $S^3$, which can be extended to an invariant of proper links in $S^3$. The main result is:
\begin{theorem*}
\label{mainresult}
For $g \geq 1$ let $\sigma \in \Spin(\Sigma_g)$, and let  $x \in H^1(\Sigma_g; \mathbb{Z}/2)$. Then Sato's maps $\beta_{\sigma, x} : \Mod_{g,1}[2] \rightarrow \mathbb{Z}/8$ can be evaluated as: 
\begin{equation*}
    \beta_{\sigma, x}(f) = (\theta_{L_f}(\sigma+x)\cdot \theta_{L_f}(\sigma +x)-\theta_{L_f}(\sigma) \cdot \theta_{L_f}(\sigma) + 8(\Arf(\theta_{L_f}(\sigma)) - \Arf(\theta_{L_f}(\sigma +x))))/2 \pmod 8.
\end{equation*}
Here $\theta_{L_f}(\sigma) \cdot \theta_{L_f}(\sigma)$ and $\Arf(\theta_{L_f}(\sigma))$ denote the total linking number and the Arf invariant of the characteristic sublink specified by $\theta_{L_f}(\sigma)$ in Construction \ref{spin_constr1}, and we use the affine action of $H^1(\Sigma_g; \mathbb{Z}/2)$ on the set of spin structures.
\end{theorem*}
Our main Theorem gives a direct mechanism for evaluating Sato's maps on any product of Dehn twists in $\Mod_{g,1}[2]$, and gives a new proof that Sato's maps are homomorphisms.
\subsection*{Applications to the Birman--Craggs maps and the Torelli group}
In Section \ref{birman_craggs_section}, we use Section \ref{surgery_diagram_construction_section} to give framed links for homology spheres; see Construction \ref{hom_sphere_constr}. We use a formula for the Rochlin invariant in \autocite[Appendix C.3]{kirbymelvin1} to give a framework for studying the Birman--Craggs maps. It is remarkable that the Birman--Craggs maps are homomorphisms \autocite[Thm.8]{birmancraggs}; we give a new proof of this fact in Theorem \ref{bcishom}. The idea of the proof is that gluing along a composition of diffeomorphisms translates to concatenation of tangle diagrams representing the Heegaard splittings. We then conclude Section \ref{birman_craggs_section} by calculating Sato's maps on bounding pairs and separating twists in Corollaries \ref{maincor2} and \ref{maincor3}. This relates Sato's maps to the Birman--Craggs maps using direct methods. We get:
\begin{corollarynonu}
Let $a,b$ be a pair of simple closed curves on $\Sigma_{g,1}$ that bound a subsurface. Let $\eta$ be the spin structure on $\Sigma_g$ with the characteristic sublink of $\theta_{L_{t_at_b^{-1}}}(\eta)$ containing none of the components from the link $L$ that represents $S^1 \times \Sigma_g $ (see Construction \ref{spin_constr1} and Section \ref{bp_eval_subsection}). If $\sigma = f^*(\eta)$ and $\sigma + x = h^*(\eta)$ for $[f],[h] \in \Mod_{g,1}$, then
\begin{equation*}
    \beta_{\sigma, x}(t_at_b^{-1}) = \mu_{\iota}(t_{f(a)}t_{f(b)}^{-1})-\mu_{\iota}(t_{h(a)}t_{h(b)}^{-1}) \pmod{2},
\end{equation*}
where $\mu_{\iota}$ denotes the Birman--Craggs map for the standard embedding $\iota:\Sigma_g \hookrightarrow S^3$. In particular, if $g \geq 3$, then we have $\beta_{\sigma,x} = \mu_{\iota \circ f}-\mu_{\iota \circ h} \pmod{2}$.
\end{corollarynonu}
In his survey on the Torelli group, Johnson asks if there is a definition of the Birman--Craggs maps that does not involve the implicit construction of a $4$--manifold \autocite[p.177]{johnsonsurvey}. Our definition uses Construction \ref{hom_sphere_constr}, and a formula computed from framed link diagrams of a $3$-manifold \autocite[Appendix C.3 and C.4]{kirbymelvin1}. To prove that this formula is well-defined, the fundamental theorem of Kirby calculus is used, and there is no dependence on Rochlin's theorem (see remark under \autocite[Cor.C.5]{kirbymelvin1}). Furthermore, there are proofs of the fundamental theorem of Kirby calculus that only use a presentation of the mapping class group \autocite{lukirbycalc}, \autocite{polyak}. We have given a definition of the Birman-Craggs maps that uses $3$--manifold topology and knot theory, which removes the logical dependence on Rochlin's theorem. One question, then, is whether it is possible to push the $3$--manifold techniques down a dimension and give an inherently $2$--dimensional description of the Birman-Craggs maps. This would give a group theoretic description of the Rochlin invariant, as Johnson pointed out.
\subsection*{The Birman--Craggs maps and Meyer's signature cocycle}
There is a natural extension of $\mu_h: \mathcal{I}_{g,1} \rightarrow \Z/2$ to a map $\mu_h: \Mod_{g,1}[2] \rightarrow \Z/16$; if we cut $S^3$ along the Heegaard surface $h(\Sigma_g)$, and reglue via an element $[f] \in \Mod_{g,1}[2]$, we get a $\Z/2$--homology sphere $S(f)$, so we can take the Rochlin invariant of its unique spin structure. This map is no longer a homomorphism, but our methods imply that the failure of these extensions from being homomorphisms is measured by Meyer's signature cocycle, restricted to $\Mod_{g,1}[2]$; Meyer's signature cocycle $\tau_g: \Mod_{g,1} \times \Mod_{g,1} \rightarrow \Z$ computes the signature of surface bundles with prescribed monodromy \cite{meyer}. In Section \ref{meyer_cocycle_section}, we show the following:
\begin{customcor}{4}
     For the standard embedding $\iota: \Sigma_g \rightarrow S^3$, there exists a well--defined map
    \begin{align*}
    \alpha_{\iota}: \Mod_{g,1}[2] \rightarrow \Z/16 \\
    f \mapsto \mathrm{Sign}(Y_f),
    \end{align*}
    where $Y_f$ is a cobordism between $S(f)$ and $M_f$, defined in Section \ref{meyer_cocycle_section}. Then we have $\tau_g \equiv \partial(\alpha_{\iota} + \mu_{\iota}) \pmod{16}$, where $\tau_g$ is Meyer's signature cocycle, and $\mu_{\iota}$ is the extension of the Birman--Craggs map described above.
\end{customcor}

Section \ref{meyer_cocycle_section} contains a formula for the extension $\mu_{\iota}: \Mod_{g,1}[2] \rightarrow \Z/16$ in terms of squares of Dehn twists; see Theorem \ref{bc_extension_formula}. This gives an algorithm for computing Rochlin invariants of $\Z/2$--homology spheres. The methods of Sections \ref{meyer_cocycle_section} and \ref{sato_new_def_section} combine to give an algorithm for evaluating $\tau_g$ on elements in $\Mod_{g,1}[2]$. It would be useful to find a closed formula for evaluating the map $\alpha_{\iota}: \Mod_{g,1}[2] \rightarrow \Z/16$ of Corollary \ref{meyer--birman--craggs}.

\subsection*{Acknowledgments}
The author thanks Tara Brendle for the guidance, encouragement, and advice she has given him, and thanks Benson Farb, Andy Wand, Brendan Owens, Vaibhav Gadre, and Dan Margalit for useful discussions and feedback. 

\section{Overview of Sato's homomorphisms} \label{sato_overview_section}

In this section, we review definitions of spin structures on manifolds, and Sato's construction of his homomorphisms.

We fix an embedded disc $D \subset \Sigma_g$, and think of $\Mod_{g,1}$ as the group of orientation--preserving diffeomorphisms fixing $D$ pointwise, modulo isotopies through maps of the same form. All homology groups are taken with coefficients in $\mathbb{Z}/2$ unless otherwise specified, and we use the same notation for a continuous map as its induced homomorphism on homology ($f=f_*$). Sato's idea is to take the mapping torus
$M_{f} = I \times \Sigma_g / (1,x) \sim (0,f(x))$ for $[f] \in \Mod_{g,1}$ and analyze the spin structures on $M_{f}$ induced by a given spin structure on $\Sigma_g$. 

\subsection*{Spin structures on manifolds}

Let $\pi:E \rightarrow V$ be a smooth oriented real vector bundle of rank $n \geq 2$ equipped with a metric and denote by $SO(n)  \overset{i}{\rightarrow}  P(E) \overset{p}{\rightarrow} V$ the oriented orthonormal frame bundle associated to this bundle. When the second Stiefel-Whitney class $\omega_2(E)$ vanishes, we have the short exact sequence
\begin{equation}
\label{ses1}
0 \rightarrow H_1(SO(n)) \overset{i}{\rightarrow} H_1(P(E)) \overset{p}{\rightarrow} H_1(V) \rightarrow 0.
\end{equation}
A \textit{spin structure} $\tau$ on $E$ is a homomorphism $\tau:H_1(V) \rightarrow H_1(P(E))$ such that $p \circ \tau = \mathrm{id}_{H_1(V)}$. We denote by $\Spin(E)$ the set of spin structures on $E$.

By the splitting lemma, the existence of $\tau$ as above is equivalent to the existence of a homomorphism $\tau': H_1(P(E)) \rightarrow H_1(\mathrm{SO}(n))$ such that $\tau' \circ i = \mathrm{id}_{H_1(\mathrm{SO}(n))}$. This gives a cohomology class $\tau \in H^1(P(E)) = \Hom(H_1(P(E)), \mathbb{Z}/2)$. This class $\tau$ can be evaluated on framed curves in $V$, and the condition $\tau \circ i = \mathrm{id}$ implies that $\tau$ evaluates to one on a trivial loop in $V$ with zero framing.

After identifying $H_1(SO(n))$ with $\mathbb{Z}/2$, there is a simply transitive action of $H^1(V) = \Hom(H_1(V), \mathbb{Z}/2)$ on $\Spin(E)$ given by taking a homomorphism $c: H_1(V) \rightarrow \mathbb{Z}/2$ and $\tau \in \Spin(E)$ and constructing another right splitting $\tau + i \circ c$. The number of spin structures for $ P(E) \overset{p}{\rightarrow} V$ is given by $|H^1(V)|$. We refer to a smooth manifold $M$ as spin if there exists a spin structure on the tangent bundle $TM$. Denote by $\Spin(M)$ the set of all spin structures on the tangent bundle $TM$ of $M$, whenever $M$ is a spin manifold.

The group $\Diff^+(M)$ acts on $\Spin(M)$ via pullback: for a diffeomorphism $g \in \Diff^+(M)$ and spin structure $\sigma: H_1(M) \rightarrow H_1(P(TM))$, we get the spin structure $g^*(\sigma) \coloneqq dg^{-1} \circ \sigma \circ g \in \Spin(M)$.

\subsection{Sato's construction.} \label{sato_constructions_subsection}

To define the homomorphisms $\beta_{\sigma, x}$, we must define a map $\theta : \Spin(\Sigma_g) \rightarrow \Spin(M_{f})$ for every given $[f] \in \Mod_{g,1}[2]$. We use the homotopy long exact sequence for the fibration $\Sigma_g \rightarrow M_{f} \rightarrow S^1$ and the fact that the abelianization functor is a right exact functor as well as a natural transformation between $\pi_1$ and $H_1$. If we combine this with the Wang exact sequence (see \autocite[Example 2.48]{Hatcher}), we get that the following sequence is exact:
\begin{equation}
\label{ses2}
0 \rightarrow H_1(\Sigma_g) \rightarrow H_1(M_{f}) \rightarrow H_1(S^1) \rightarrow 0,
\end{equation}
where the homomorphisms are induced by the inclusion and projection to $S^1$ respectively.
Since $f$ fixes $D \subset \Sigma_g$ pointwise we have an embedding $l:S^1 \times D \rightarrow M_{f}$, giving a right splitting of the short exact sequence (\ref{ses2}). This is equivalent to an isomorphism $H_1(M_{f}) \overset{h}{\rightarrow} H_1(S^1) \bigoplus H_1(\Sigma_g)$. A right splitting of the sequence (\ref{ses1}) for $V = M_{f}$ is obtained from $h$ in the following way:

Choose $p \in \mathrm{int}(D)$ and an arbitrary orthonormal frame $\{b_0, b_1 \}$ for $T_p(D)$. Pick a non-zero tangent vector field $v$ of $TS^1$ and denote by $v_t \in T_tS^1$ the value of $v$ at $t \in S^1$. For $S^1 \times D \subset M_{f}$ define the framing $\hat{l} : S^1 \rightarrow P(S^1 \times D)$ by 
\begin{equation*}
\hat{l}(t) = (v_t, b_0\cos(2\pi t)+ b_1\sin(2 \pi t), b_1\cos( 2 \pi t) - b_0\sin(2 \pi t)).
\end{equation*}
This frames the curve $S^1 \times \{p\}$ with a tangent vector field to the curve, and two transverse vector fields that rotate a total of $2 \pi$ in one traverse of the curve. This framing induces the homomorphism
\begin{equation}
\label{s1framing}
\hat{l}:H_1(S^1) \overset{\hat{l}}{\rightarrow} H_1(P(S^1 \times D)) \overset{\mathrm{inc}}{\rightarrow} H_1(P(M_{f})).
\end{equation}

For the $\Sigma_g$ factor, consider the smooth map $P((-\epsilon, \epsilon) \times \Sigma_g) \rightarrow P(M_{f})$ induced by the inclusion of a tubular neighbourhood $(-\epsilon, \epsilon) \times \Sigma_g$ of the fiber into $M_{f}$ for small $\epsilon >0$. Think of a spin structure $\sigma$ of $\Sigma_g$ as a right splitting of the sequence (\ref{ses1}) with $V = (-\epsilon, \epsilon) \times \Sigma_g$, and let $\overline{\sigma}:H_1(\Sigma_g) \rightarrow H_1(P(M_{f}))$ denote the following composition
\begin{equation}
\label{sframing}
\overline{\sigma}:H_1(\Sigma_g) \overset{\mathrm{inc}}{\rightarrow} H_1((-\epsilon, \epsilon) \times \Sigma_g) \overset{\sigma}{\rightarrow} H_1(P((-\epsilon, \epsilon) \times \Sigma_g)) \overset{\mathrm{inc}}{\rightarrow} H_1(P(M_{f})).
\end{equation}
Then construct a homomorphism $H_1(M_{f}) \rightarrow H_1(P(M_{f}))$ by combining (\ref{s1framing}) and (\ref{sframing}), to obtain a map $\theta: \Spin(\Sigma_g) \rightarrow \Spin(M_{f})$. 

In summary, the map $\theta$ inputs a spin structure $\sigma$ of $(-\epsilon, \epsilon) \times \Sigma_g$, and outputs the right splitting $(\hat{l} \oplus \overline{\sigma}) \circ h$ in the following commutative diagram.

\[\begin{tikzcd}[ampersand replacement=\&]
	0 \& {\Z/2} \& {H_1(P(M_f))} \& {H_1(M_f)} \& 0 \\
	\&\&\& {H_1(S^1) \oplus H_1(\Sigma_g)}
	\arrow[from=1-1, to=1-2]
	\arrow["i", from=1-2, to=1-3]
	\arrow["p", from=1-3, to=1-4]
	\arrow[from=1-4, to=1-5]
	\arrow["h", from=1-4, to=2-4]
	\arrow["{\hat{l} \oplus \overline{\sigma}}", from=2-4, to=1-3]
\end{tikzcd}\]

Now we describe the homomorphisms $\beta_{\sigma, x} :\Mod_{g,1}[2] \rightarrow \mathbb{Z}/8$ for $\sigma \in \Spin(\Sigma_g), x \in H^1(\Sigma_g)$. Rochlin's Theorem states that every spin $3$-manifold bounds a spin $4$-manifold. Fix a spin structure $\tau$ on $M_{f}$ and choose a compact spin $4$-manifold $V$ spin bounding $(M_{f}, \tau)$ and define the Rochlin invariant
\begin{equation*}
R(M_{f}, \tau) = \mathrm{Sign}(V) \pmod{16},
\end{equation*}
where $\mathrm{Sign}(V)$ is the signature of the intersection form of $V$.
This is well-defined by Novikov additivity, and Rochlin's result that a closed spin $4$-manifold has signature divisible by $16$.

\label{homom}
Let $\sigma \in \Spin(\Sigma_g)$ and $x \in H^1(\Sigma_g)$, define the map $\beta_{\sigma, x} :\Mod_{g,1}[2] \rightarrow \mathbb{Z}/8$ to be 
\begin{equation*}
\beta_{\sigma, x}([f]) = (R(M_{f} ,\theta(\sigma)) - R(M_{f}, \theta(\sigma + x)))/2 \pmod 8 .
\end{equation*}

Sato showed that these maps are homomorphisms and that they have image in $\mathbb{Z}/8$ \autocite[Lemmas 2.2 and 4.3]{sato}. He then examined the Brown invariant of a $Pin^-$ bordism class represented by a surface embedded in $M_{f}$ to arrive at a formula for the homomorphisms $\beta_{\sigma, x}$ on squares of Dehn twists \autocite[Prop. 5.2]{sato}. To describe the formula we need the following.

\subsection*{Spin structures and quadratic forms}
A \textit{symplectic quadratic form} is a map $q:H_1(\Sigma_g) \rightarrow \Z/2$ that satisfies $q(x+y) = q(x) + q(y) + x \cdot y$ for all $x,y \in H_1(\Sigma_g)$, where $x \cdot y$ denotes the pairing given by the intersection form.

\begin{theorem} \label{spin_strucures_quadratic_forms_bij}
    \cite[Theorems 3A,3B]{johnsonspin}
    There is a bijection $\sigma \mapsto q_{\sigma}$ between spin structures $\sigma \in \Spin(\Sigma_g)$ and symplectic quadratic forms $q_{\sigma}: H_1(\Sigma_g) \rightarrow \Z/2$. Let $H^1(\Sigma_g) = \Hom(H_1(\Sigma_g), \Z/2)$ act affinely on $\Spin(\Sigma_g)$ as above, let $f \in \Diff^+(\Sigma_g)$, and let $x \in H^1(\Sigma_g)$. Then $q_{\sigma + x} = q_{\sigma} + x$, and $q_{f^*(\sigma)} = f^*q_{\sigma}$.
\end{theorem}
\begin{proof}
We only sketch the bijection here: let $\sigma \in \Spin(\Sigma_g)$ and let $x \in H_1(\Sigma_g)$. Choose a simple closed curve $\alpha \subset \Sigma_g$ representing $x$. Let $N(\alpha)$ denote the normal bundle of $\alpha$ in $\Sigma_g$. Pick a unit tangent vector field $s: \alpha \rightarrow T(\alpha)$ and a nonzero section $X: \alpha \rightarrow N(\alpha)$. Viewing $\sigma \in \Spin(\Sigma_g)$ as a left splitting of the short exact sequence (\ref{ses1}) gives us a homomorphism $k_{\sigma}:H_1(P(\Sigma_g)) \rightarrow \mathbb{Z}/2$. Since $T(\Sigma_g) |_{\alpha} = N(\alpha) \bigoplus T(\alpha)$, we can define the associated quadratic form $q_{\sigma}$ to be
$$q_{\sigma}(x) = k_{\sigma}(X \oplus s) +1.$$
\end{proof}
Symplectic quadratic forms are determined by their values on a symplectic basis for $H_1(\Sigma_g)$, so we can specify an arbitrary spin structure by choosing the values of $q_{\sigma}$ on a fixed symplectic basis.

We need the following function to state Sato's formula for $\beta_{\sigma , x}(t_C^2)$.
For a homology class $z \in H_1(\Sigma_{g})$, define the map $i_z: H_1(\Sigma_{g}) \rightarrow \mathbb{Z}/8$ by 
\begin{equation*}
i_z(y) = 
   \left\{
\begin{array}{ll}
      1 , & z \cdot y =1 \pmod 2 \\
      0 , & z \cdot y =0 \pmod 2
\end{array} 
\right. 
\end{equation*}
where $\cdot$ denotes the intersection form on $H_1(\Sigma_{g})$.

\begin{prop}\autocite[Proposition 5.2]{sato} For a non-separating simple closed curve $C \subset \Sigma_g \setminus D$, we have \begin{equation*}
    \beta_{\sigma, x}(t_C^2) = (-1)^{q_{\sigma}(C)}i_{[C]}(PD(x)),
\end{equation*}
where $q_{\sigma}: H_1(\Sigma_g) \rightarrow \mathbb{Z}/2$ is the quadratic form associated to $\sigma \in \Spin(\Sigma_g)$ and $PD(x)$ is the Poincare dual of $x \in H^1(\Sigma_g)$.
\end{prop}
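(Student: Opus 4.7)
The plan is to apply the Main Theorem to $f = t_C^2$ using the explicit framed link diagram produced by the algorithm of Section~3, for which both sides of the formula can be computed directly.

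First, I would obtain a framed link $L \subset S^3$ representing $M_{t_C^2}$, together with the standard embedding of $\Sigma_g$ and the fixed embedding of $D \times S^1$. Because $t_C^2$ is supported in a small annular neighborhood of $C \subset \Sigma_g \setminus D$, the algorithm of Section~3 should produce a decomposition $L = L_0 \cup L_C$, where $L_0$ represents the identity mapping torus $\Sigma_g \times S^1$ and $L_C$ is the short tangle realizing the twist. The components of $L_C$ are unknots parallel to $C$ on the standard embedding of $\Sigma_g$, and their framings are the integer-surgery expansion of the $-1/2$ rational framing along $C$ that realizes $t_C^2$.

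For each spin structure $\sigma \in \Spin(\Sigma_g)$, I would then identify the characteristic sublink $C_\sigma$ of $\theta(\sigma)$ using the condition $C_\sigma \cdot L_i \equiv L_i \cdot L_i \pmod 2$ from Section~4.2. The sublink splits as $C_\sigma = C_\sigma^0 \cup C_\sigma^C$; the piece $C_\sigma^0$ depends only on $\sigma$ and $L_0$, and therefore contributes identically to the Main Theorem expressions evaluated at $\theta(\sigma)$ and $\theta(\sigma+x)$ apart from components where $x$ acts nontrivially. The piece $C_\sigma^C$ is the one that sees the twist; since $\theta(\sigma)$ restricts to $\sigma$ on the fiber and the components of $L_C$ represent $[C]$ there, membership of these components in $C_\sigma^C$ is governed precisely by $q_\sigma(C) \in \mathbb{Z}/2$.

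To finish, I would compute the Main Theorem formula. Passing from $\sigma$ to $\sigma + x$ flips the membership of a component $L_i$ in the characteristic sublink exactly when $x$ pairs nontrivially with $[L_i]$ on the fiber; for the components of $L_C$ this is the condition $i_{[C]}(PD(x)) = 1$. When this occurs, the net change in the total linking number plus the Arf contribution should combine to $\pm 2 \pmod{16}$, with the sign controlled by whether $q_\sigma(C) = 0$ or $1$ (i.e.\ whether the new components are added or removed); when $i_{[C]}(PD(x)) = 0$ all contributions cancel. Dividing by $2$ then yields $(-1)^{q_\sigma(C)} i_{[C]}(PD(x))$. The main obstacle is the bookkeeping in this last step: because of the $/2$ the computation has to be performed mod $16$, so one must track not merely the parities but the actual self-linking numbers and Arf-invariant contributions of the components of $L_C$, while verifying that the $L_0$-contributions to $\theta(\sigma) \cdot \theta(\sigma)$ and $\Arf(\theta(\sigma))$ either cancel against those of $\theta(\sigma+x)$ or are absorbed into the same $\pm 2$ shift above.
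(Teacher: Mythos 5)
Your strategy is the one the paper itself uses: it proves exactly this statement (in the guise of Corollary \ref{maincor1}) by feeding the surgery diagram for $M_{t_C^2}$ from Section 3 into the formula of the Main Theorem and analysing the characteristic sublink. So the route is right. The problem is that your write-up stops at precisely the steps that constitute the proof, and flags them as ``bookkeeping'' to be done later; two of them are not routine and one of them is where the sign in $(-1)^{q_\sigma(C)}$ actually comes from.

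First, the Arf term does not need to be ``tracked'': it vanishes identically, and you need an argument for that. The two surgery components $L_{c_1},L_{c_2}$ realizing $t_C^2$ are parallel copies of $C$ on the fiber; the relations of Proposition \ref{ffu} force them to be both in or both out of every characteristic sublink, and if you orient them oppositely they cobound an annulus in the fiber, so band-summing along that annulus replaces them by a single unknot split from the rest of the (all $0$-framed, unknotted, unlinked) components. Hence $\Arf(\theta(\sigma))=\Arf(\theta(\sigma+x))=0$ in every case. Second, the linking-number computation must be done exactly, not just ``to $\pm2$ modulo $16$'': with the opposite orientations the relevant block of the linking matrix is $\left(\begin{smallmatrix} m+1 & -m \\ -m & m+1\end{smallmatrix}\right)$ with $m=\lambda(c,c)$, the base components of the $\#_g S^1\times S^2$ diagram contribute a zero block, and the cross-terms come in cancelling pairs $a_j,-a_j$; so $\theta(\sigma)\cdot\theta(\sigma)$ equals $2$ or $0$ according to whether $L_{c_1}\cup L_{c_2}$ lies in $\theta(\sigma)$, full stop. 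Third, and most importantly, your claim that membership of $L_{c_1}\cup L_{c_2}$ in $\theta(\sigma)$ ``is governed precisely by $q_\sigma(C)$'' is the crux and is only asserted. The characteristic-sublink condition tests whether $\theta(\sigma)$ extends over a $2$-handle attached with framing $\lambda(c,c)+1$, whereas $q_\sigma(C)$ is defined (with a $+1$ normalization) via the \emph{surface} framing $\lambda(c,c)$; it is exactly this framing shift by $1$ that makes ``$L_{c_1}\cup L_{c_2}\notin\theta(\sigma)$'' correspond to $q_\sigma(C)=0$, and likewise the flip of membership under $\sigma\mapsto\sigma+x$ must be identified with $x([C])=1$, i.e.\ $i_{[C]}(PD(x))=1$, by running $x$ through the relations of Proposition \ref{ffu}. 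Without these identifications you cannot distinguish the stated formula from its negative. (A minor point: the paper realizes $t_C^{\pm1}$ by a single integer-framed pushoff with framing $\lambda(c,c)\pm1$ and stacks two of them at different heights in $\nu(\Sigma_g)$, rather than expanding a rational framing; the resulting link is the same, but your sign convention for the framing should match the $t_C^{+2}$ you are twisting by.)
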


Since $\Mod_{g,1}[2]$ is generated by squares of Dehn twists about non separating simple closed curves this formula is enough to calculate the abelianization of $\Mod_{g,1}[2]$ (see \autocite[Proposition 2.1]{humphriesfactorisation}).
We give an alternative description of $\beta_{\sigma, x}$ that allows us to directly evaluate these maps. To find this formula we need to write mapping tori as surgery diagrams.

\section{Surgery diagrams and ribbon graphs} \label{surgery_diagram_construction_section}
In this section, we construct surgery diagrams of Heegaard splittings and mapping tori. We think of representatives of elements in $\Mod_{g,1}$ as orientation--preserving diffeomorphisms of $\Sigma_g$ fixing an embedded disk $D$ pointwise, so the $3$--manifolds we consider are closed. We define ribbon graphs of Heegaard splittings, and give a procedure for obtaining a surgery diagram of a mapping torus from this ribbon graph. The discussion on Heegaard splittings requires only $3$--manifold topology, while the procedure for obtaining mapping tori uses $4$--dimensional handlebodies, and dotted circle notation. For more information about the $3$-manifold constructions used here see \autocite[Appendix]{kmtorusbundle}, \autocite[Section 4]{ReshTur2} and \autocite[Section 2.2]{wright}. The terminology in the next paragraph is consistent with \autocite{gompfstipsicz}.

An \textit{$n$-dimensional $k$-handle} attached to a smooth manifold $M$ will be a copy of $D^k \times D^{n-k}$ attached to $\partial M$ via an embedding $(\partial D^k) \times D^{n-k} \rightarrow \partial M$. For a handlebody decomposition of a smooth $4$-manifold $M$, we assume there is one $0$-handle, and that any $4$-dimensional $1$-handles are attached to the boundary ($S^3$) of this $0$-handle; these $1$-handles can be pictured as two copies of $D^3$ in $S^3$ identified to each other via a reflection. Any $4$-dimensional $2$-handles $D^2 \times D^2$ attached to the manifold can be specified by drawing the attaching circle $(\partial D^2) \times \{0\}$ along with a framing of the normal bundle for this attaching circle in $\partial M$. There is a bijection between these framings and the integers, explained below. Call a collection of framed links in $S^3$ with embedded pairs of $D^3$ in $S^3$ a \textit{Kirby diagram} of $M$.

If $M$ has only $4$-dimensional $2$-handles attached to a $0$-handle, we call $M$ a $2$-handlebody. Every $2$-handle $D^2 \times D^2$ is attached along an embedding $\eta : (\partial D^2) \times D^2 \rightarrow S^3$; the $(\partial D^2) \times D^2$ part of the boundary of $D^2 \times D^2$ is then in the interior of the new manifold, and the $D^2 \times (\partial D^2)$ factor changes the boundary $3$-manifold. On the boundary it is equivalent to removing a tubular neighbourhood of the attaching circle and gluing in a solid torus $D^2 \times (\partial D^2)$ by sending the meridian curves $\partial D^2 \times \{pt\}$ to their images under the embedding $\eta$. This is called \textit{Dehn surgery}, and the Kirby diagram of $M$ is also a surgery diagram for $\partial M$. The Dehn-Lickorish theorem states that any closed orientable $3$-manifold can be described by such a surgery diagram \autocite{lickorishannals}.

\subsection{Framings of $2$-handles} \label{framing_2_handle_subsection}
Let $\widetilde{\varphi}: \partial D^2 \rightarrow \partial M$ be an embedding, and let $\nu(\partial D^2)$ be the normal bundle of $\widetilde{\varphi}$ in $\partial M$. Choose a framing $\{ s_1, s_2 \}$ of $\nu(\partial D^2)$, and a tubular neighbourhood $N: \nu(\partial D^2) \rightarrow \partial M$. We obtain a gluing map $\varphi: (\partial D^2) \times D^2 \rightarrow \partial M$ for a $2$-handle by setting 
$\varphi(x,a,b) = N(as_1(\widetilde{\varphi}(x))+bs_2(\widetilde{\varphi}(x)))$. The meridians $(\partial D^2) \times \{pt\}$ are glued to pushoffs of the attaching circle $\widetilde{\varphi}$ along these frames. Note that the \textit{core} $\{0\} \times \partial D^2$ of this added solid torus is sent to a meridian of the attaching circle $\widetilde{\varphi}$ in the Kirby diagram.

Suppose the $4$-dimensional $2$-handles are attached to $\partial D^4 = S^3$ in the notation above; there is a bijection between framings of a $4$-dimensional $2$-handle $D^2 \times D^2$ and $\pi_1(SO(2)) = \mathbb{Z}$, but this correspondence requires a choice of an arbitrary framing. Define the $0$-framing to be the non--zero transverse vector field to the attaching circle $\varphi(\partial D^2 \times \{0\}) = K$ in $S^3$ induced from the collar of any Seifert surface for $K$. For the pushoff $K'$ of $K$ in the direction of the $0$-framing, we have $lk(K, K') = 0$, and the bijection between framings and $\mathbb{Z}$ is realised by linking numbers of pushoffs. The vector field corresponding to $k \in \mathbb{Z}$ is given by a vector field which deviates from the $0$-framing by $k$ full twists (right handed twist is +1). Note that this framing integer is independent of the orientation chosen for the attaching circle $K$, since reversing $K$ also reverses the pushoff $K'$ in the direction of the vector field. For the embedding $\varphi: \partial D^2 \times D^2 \rightarrow S^3$ of the previous paragraph, its framing is given by $lk(\varphi(\partial D^2 \times 0), \varphi(\partial D^2 \times 1))$.

\subsection{Surgery diagrams for Heegaard splittings}  \label{heegaard_splitting_subsection}
To aid in the constructions below, we use the following definitions. A \textit{coupon} is an embedding of $I^2$ in the interior of the unit cube $I^3$. Fix a collection of coupons in $I^3$, and define a tangle to be an embedding of an oriented $1$-manifold in $I^3$, with its boundary contained in either $1/2 \times I \times \{ 0,1\}$ or the top or bottom edges $I \times \{0,1\}$ of coupons. Tangles are considered up to isotopy keeping the endpoints fixed. A framing of a tangle is an embedding of its normal bundle up to isotopy; this embedding is specified by a choice of parallel pushoff for the tangle, as above. We assume that the framing of any tangle connected to a coupon is parallel to the top or bottom edges of the coupon. A \textit{ribbon graph} is a collection of framed tangles and coupons in $I^3$, where tangles only intersect coupons at their endpoints. An example of a ribbon graph used often is given in Figure \ref{ribbongraph}. Refer to this ribbon graph as $\Delta_g$ and denote by $-\Delta_g$ its inversion.
\begin{figure}
    \centering
    \includegraphics[scale = 0.2]{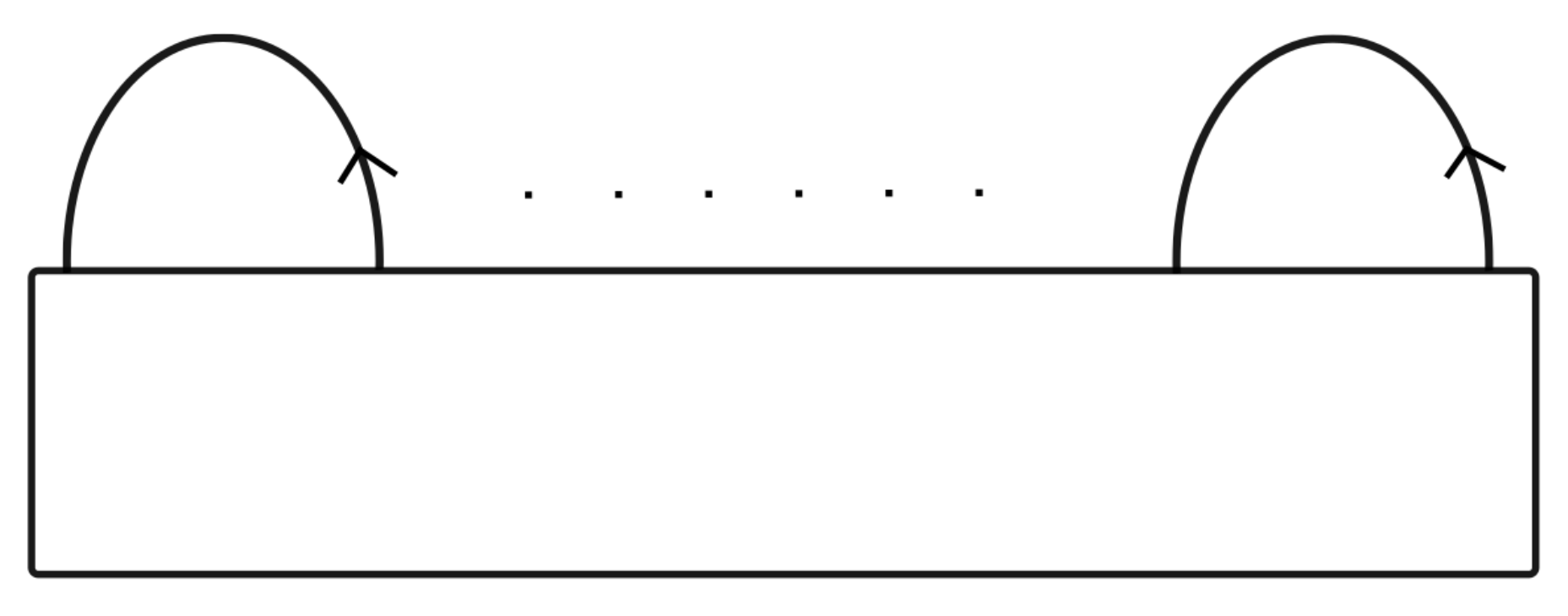}
    \caption{The ribbon graph $\Delta_g$, where all tangles are given the $0$-framing}
    \label{ribbongraph}
\end{figure}

Let $H_g$ be an oriented handlebody of genus $g$; we assume that $H_g$ has one $0$-handle $D^3$ and $g$ $3$-dimensional $1$-handles all attached to the $0$-handle. Let $-H_g$ denote the handlebody $H_g$ with opposite orientation. We fix our model of $\pm H_g$ to be a regular neighbourhood of $\pm \Delta_g$ in $S^3$, and we fix $\partial(H_g) = \Sigma_g$ as our model for a surface of genus $g$. Given an element $[f] \in \Mod_{g,1}$ we can form the following closed $3$-manifold
\begin{equation*}
    S_g(f) = H_g \cup_f -H_g = H_g \sqcup I \times \Sigma_g  \sqcup -H_g / \sim,
\end{equation*}
where $(0,x) \in \{ 0 \} \times \Sigma_g  \sim x \in \partial(H_g)$ and $(1,x) \in \{ 1 \} \times \Sigma_g  \sim f(x) \in \partial(-H_g)$. We refer to the manifold $S_g(f)$ as a \textit{Heegaard splitting} of genus $g$. When the genus is clear from context, we abbreviate $S_g(f)$ to $S(f)$. There is an embedding of $\Sigma_g$ in this manifold given by $\partial(H_g)$ that we refer to as a \textit{Heegaard surface}.

Let $L$ be a framed link in $S^3 -\pm \Delta_g$ such that surgery along $L$ produces $S(f)$ with $\pm \Delta_g$ embedded in $S^3 \setminus L$ as in Figure \ref{ribbongraph}. We get a ribbon graph $L \cup \pm \Delta_g$ for the manifold $S(f)$, viewed as a Kirby diagram for $S(f)$ with added data. To get a framed link $L$ representing $S(f)$ in this way, note that $\#_g S^1 \times S^2 = S(\mathrm{id})$ has a surgery diagram given by $g$ disjoint unknots with $0$--framing, and $H_g, -H_g$ are given by tubular neighbourhoods of the copies of $\pm \Delta_g$ in Figure \ref{identityhandlebody} .

\begin{figure}
    \centering
    \includegraphics[scale = 0.3]{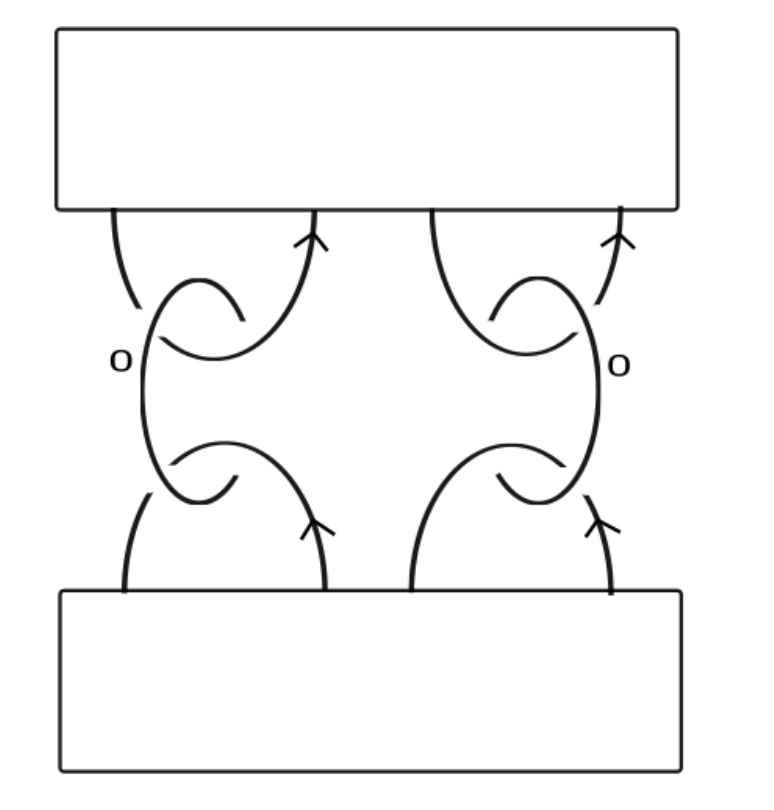}
    \caption{$\#_g S^1 \times S^2 = H_g \bigcup_{id} -H_g$ with $\pm \Delta_g$ embedded in them.}
    \label{identityhandlebody}
\end{figure}

Take a positive normal to $\Sigma_g$ in $S^3$ and take an embedding of $I \times \Sigma_g$ into $S^3$ specified by this framing. Think of $\{ 0 \} \times \Sigma_g$ as $\partial(H_g)$ and $\{ t \} \times \Sigma_g $ as a pushoff of $\Sigma_g$ in the direction of the positive normal, that points out the page. Denote the image of this embedding of $I \times \Sigma_g$ into $S^3$ by $\nu(\Sigma_g)$. We ensure $\nu(\Sigma_g)$ does not intersect any tangles in the diagram. Next, we modify the gluing map from $S(id)$ to $S(f)$ using $\nu(\Sigma_g)$. 

Let $c$ be a simple closed curve in $\Sigma_g \setminus D$. To obtain $S(t_c^{\pm1})$ from $S(\mathrm{id})$ using Dehn surgery, pick a fiber $\{\mathrm{pt}\} \times \Sigma_g := F$ in $\nu(\Sigma_g)$ and choose an embedding of $c$ in $F$, let $A$ be an annular neighbourhood of $c$ in $F$, and write $N$ for the solid torus obtained by thickening $A$ to one side of $F$ in $\nu(\Sigma_g)$. Now remove $N$ and reglue it by the map of Figure \ref{stalltwist}. Away from $N$ the fibering is the same, but as we pass across $N$ a Dehn twist about $c$ occurs, so we have cut open the fiber surface $F$, and reglued via $t_c^{\pm 1}$. This is equivalent to performing Dehn surgery along $c$, with framing specified by Figure \ref{stalltwist}. 

\begin{figure}
    \centering
    \includegraphics[scale=0.3]{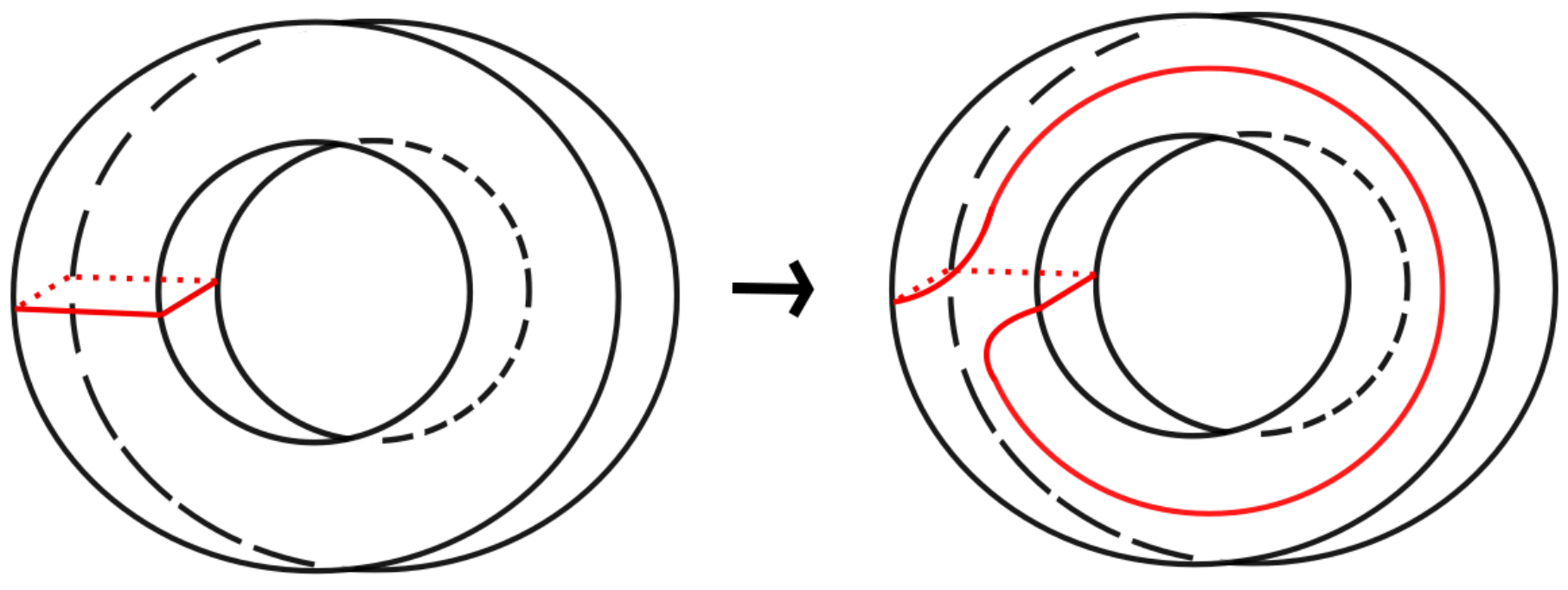}
    \caption{}
    \label{stalltwist}
\end{figure}

Next, suppose we have $f$ written as a product of Dehn twists $t_{c_n} \cdot \cdot \cdot t_{c_1}$. Since $\nu(\Sigma_g) = I \times \Sigma_g$, we pick $t_1 < t_2 < \cdot \cdot \cdot < t_n \in I$ and place the curve $c_i$ in the Kirby diagram at $\{ t_i \} \times \Sigma_g$. Dehn surgery along these curves using the map given by Figure \ref{stalltwist} gives $S(f)$. The framing and linking of these curves in the Kirby diagram is captured by Seifert's linking form, described below. An example of the tangle diagrams obtained from this method is given in Figure \ref{tangle1}.

\subsection*{Seifert pairing} 

Let $F$ be an oriented surface embedded in $S^3$; given a curve $a$ on $F$, let $a^+$ denote the pushoff of $a$ in the direction of the positive normal to $F$. We define Seifert's linking pairing 
\begin{equation*}
    \lambda : H_1(F; \mathbb{Z}) \times H_1(F; \mathbb{Z}) \rightarrow \mathbb{Z}
\end{equation*}
by the formula $\lambda(a,b) = lk(a, b^+)$. This gives a well-defined bilinear pairing that is an invariant of the ambient isotopy class of the embedding of $F$ into $S^3$; see \autocite[Chapter VII]{kauffmanknots}. We orient $S^3$ and $F$ so that the positive normal points out of the page, toward the reader. 

On the boundary, attaching a $4$-dimensional $2$-handle with framing $n$ along a knot $K$ is equivalent to removing a solid torus neighbourhood of $K$ and gluing a solid torus back in by sending a meridian to the pushoff $K'$ of $K$ in the direction of the transverse vector field of the framing.

We want to remove a torus neighbourhood $N$ of $c$, and reglue by the map of Figure \ref{stalltwist}. A meridian of the surgered torus is sent to the rightmost red curve of Figure \ref{stalltwist}, so this is equivalent to Dehn surgery along $c \subset F$ with framing $\lambda(c,c) \pm 1$, where $\pm1$ comes from the gluing being $t_c^{\pm1}$.

\begin{figure}
\centering
\begin{tikzpicture}
    \node (image) {\includegraphics[scale = 0.6]{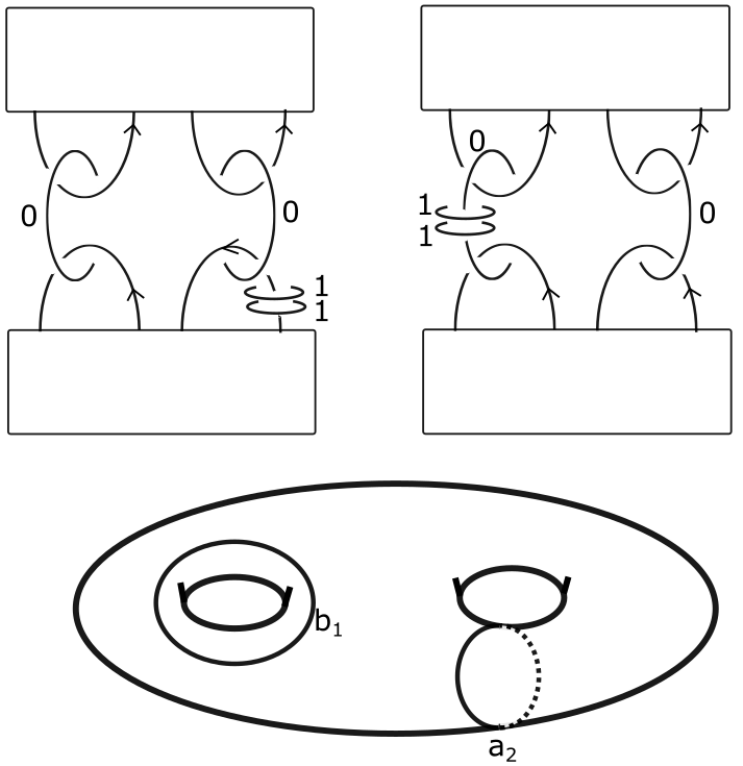}};
    \draw[decorate, decoration={brace, mirror, amplitude=5pt, raise=-2pt}, thick]
        ([yshift=-36pt]image.north west) -- ([yshift=130pt]image.south west)
        node[midway, xshift=-0.6cm, anchor=east] {$\Gamma_{t_{a_2}^2}$};
    
    \draw[decorate, decoration={brace, amplitude=5pt, raise=-3pt}, thick]
        ([yshift=-36pt]image.north east) -- ([yshift=130pt]image.south east)  
        node[midway, xshift=0.6cm, anchor=west] {$\Gamma_{t_{b_1}^2}$};
\end{tikzpicture}
\caption{Ribbon graphs for $S(t_{a_2}^2)$ (left) and $S(t_{b_1}^2)$(right).}
\label{tangle1}
\end{figure}

\subsection{Surgery diagrams of mapping tori} \label{mapping_tori_surgery_subsection}
Now we obtain framed links for mapping tori $M_f$ from the ribbon graphs $L \cup \pm \Delta_g$ of $S(f)$. Removing regular neighbourhoods $\nu (\pm \Delta_g)$ of $\pm \Delta_g$ in $S^3$ from our ribbon graph for $S(f)$ results in a manifold diffeomorphic to $I \times \Sigma_g$, cut along $\{ \mathrm{pt} \} \times \Sigma_g$, and reglued via $f$. After identifying the remaining boundary surfaces via the identity, we get $M_{f}$. Suppose we specify the manifold $S(f)$ by a ribbon graph $L \cup \pm \Delta_g$ in $S^3$ as above. Removing $\nu(\pm \Delta_g)$ and identifying the two boundary surfaces is equivalent to adding a copy of $D^1 \times H_g$ to $S(f)$ by gluing $-1 \times H_g$ to $\nu(-\Delta_g)$ and $1 \times H_g$ to $\nu(\Delta_g)$; the $\partial D^1 \times H_g$ part of the boundary of $D^1 \times H_g$ is then in the interior of the new manifold, and the $D^1 \times \partial H_g$ is a new part of the boundary, glued along $\partial D^1 \times \partial H_g$.

We assume that $H_g$ has one $0$-handle $D^3$, so we add one $4$-dimensional $1$-handle $D^1 \times D^3$ to the Kirby diagram, along with $g$ $4$-dimensional $2$-handles $D^1 \times D^1 \times D^2$ coming from the $g$ $3$-dimensional $1$-handles of $H_g$. These $4$-dimensional $2$-handles are attached with framing $0$ (draw the relevant portion $\partial(D^1 \times D^1) \times \{pt\}$ for $pt \in \partial D^2$ that is visible in the Kirby diagram). This gives us a Kirby diagram for a $4$-manifold with boundary $M_f$.

Think of the two $3$-balls $\partial{D}^1 \times D^3$ of the $4$-dimensional $1$-handle attached in Figures \ref{identityhandlebody}, \ref{tangle1} as tubular neighbourhoods of the coupons of $\pm \Delta_g$ in $S^3$. The $4$-dimensional $2$-handles are given by the tangles attached to the coupons (these $2$-handles run over the $1$-handle). The endpoints of the strands in the bottom coupon's tangle are matched with the endpoints of the strand in the top coupon's tangle directly above it. Denote the $4$-manifold given by this Kirby diagram by $X_{f}$.

\subsection*{Dotted circle notation} 

Now we have Kirby diagrams of $4$--manifolds with boundary $M_{f}$. Next, we describe Akbulut's dotted circle notation\autocite{akbulutdotcirc}.

If we smooth corners so that $D^2 \times D^2 = D^4$ we have a diffeomorphism $(D^2 \setminus \nu \{ 0 \}) \times D^2 = (S^1 \times D^1) \times D^2 = S^1 \times D^3$, where $\nu \{ 0 \}$ is a tubular neighbourhood of $0 \in D^2$. From this, we see that adding a $1$-handle to $D^4$ is the same as removing an open tubular neighbourhood of a properly embedded $2$-disc $\{ 0 \} \times D^2$, whose boundary $\{ 0 \} \times \partial D^2$ is visible in the Kirby diagram as an unknot in $S^3$; draw this unknot as a \textit{dotted circle} to indicate that it corresponds to a $1$--handle.

The $\nu \{ 0 \} \times \partial D^2$ part of the tubular neighbourhood  is visible in $\partial D^4 = S^3$ as a solid torus and the annulus $(D^2 \setminus \nu \{ 0 \}) \times \{ pt \}$ allows us to isotope a curve running through the $1$- handle to $\partial \nu \{ 0 \} \times pt$, which links once with the removed solid torus in $\partial D^4$. 

\begin{figure}
    \centering
    \includegraphics[scale=0.5]{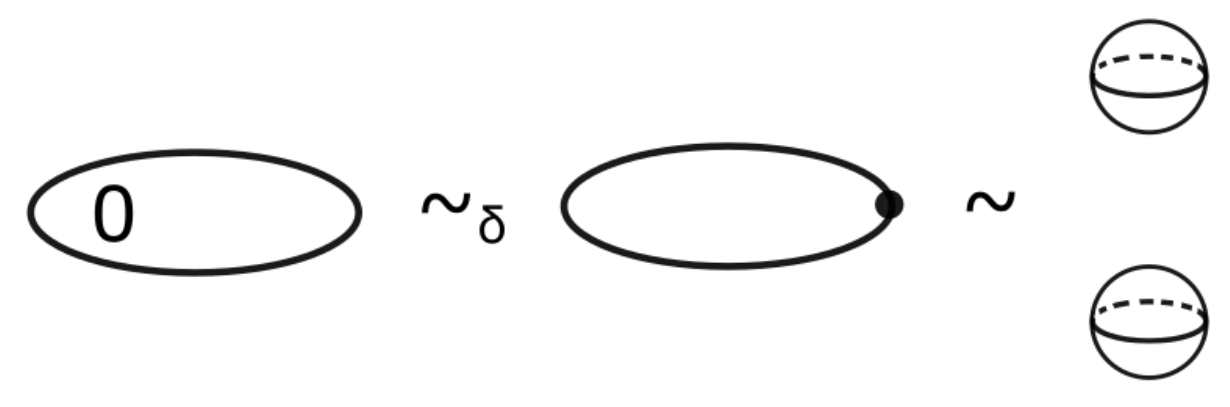}
    \caption{}
    \label{dot}
\end{figure}
Note that $\partial(S^1 \times D^3) = S^1 \times S^2 = \partial(D^2 \times S^2)$, where the Kirby diagram of $D^2 \times S^2$ is given by a $0$-framed unknot. Moving from the dotted circle to the $0$-framed unknot as in Figure \ref{dot} is done via surgery in the interior of the $4$-manifold, and the symbol $\sim_{\delta}$ denotes that there is a diffeomorphism between the two boundary $3$-manifolds.

Given a Kirby diagram with $1$-handles, we switch to dotted circle notation by isotoping the attaching circles of any $2$-handles so that they avoid the regions between the attaching balls of the $1$-handles. Then push the balls together and switch to dotted circle notation. After doing this, the curves running through the $1$-handle are linked with the dotted circle, and joined together by the gluing data of the $1$-handles. We use the convention of drawing dotted lines to indicate the paths taken to push any balls together; see \autocite[Section 5.4]{gompfstipsicz} for more details. Applying this operation to the Kirby diagram of the $4$-manifold $X_f$ obtained above, with $\partial X_f = M_{f}$, then changing the dotted circles to $0$-framed unknots, gives a framed link for $M_{f}$.

There is a natural way to switch to dotted circle notation using the ribbon graphs above; push the two balls given by the coupons together along a dotted line running to the right. An example is given by Figure \ref{surgery1}. In summary:

\begin{constr} \label{constr_1}
    Let $f: \Sigma_g \rightarrow \Sigma_g$ be a diffeomorphism with $[f] = t_{c_n}^{k_n} \cdots t_{c_1}^{k_1} \in \Mod_{g,1}$, where $k_i = \pm 1$ for all $i$. We have the following construction of framed link descriptions of $S(f)$ and $M_f$:
    \begin{enumerate}
        \item Start with a ribbon graph for $S(\mathrm{id}) = H_g \cup_{\mathrm{id}} -H_g$, as in Figure \ref{identityhandlebody}.
        \item Pick a collar, $I \times F$, of $F = \partial(\nu_{S^3}(\Delta_g))$ in $S^3$, and pick $t_1<\cdots < t_n \in I$. Then place $c_i$ in $\{ t_i \} \times F$ with framing $\lambda(c_i,c_i) + k_i$. Here $\lambda: H_1(F;\Z) \times H_1(F;\Z) \rightarrow \Z$ denotes the Seifert pairing of $F \subset S^3$. The closed components of this diagram give a framed link for $S(f)$, and the $\pm H_g$ are thought of as regular neighbourhoods of $\pm \Delta_g$ in $S^3$. 
        \item To go from $S(f)$ to $M_f$, think of the coupons of $\pm \Delta_g$ as $4$--dimensional $1$--handles, and the tangles of $\pm \Delta_g$ as $4$--dimensional $2$--handles. Then change to dotted circle notation as in Figure \ref{surgery1}, and replace the dotted circle by a $0$--framed unknot.
    \end{enumerate}
\end{constr}

After switching to dotted circle notation in construction \ref{constr_1} (3), the surface $F$ gets punctured, and can be visualised in the following way: choose a disk $D \subset S^3$, with boundary the dotted circle component that intersects the components of the framed link transversely in pairs of punctures. Take away small open disks in $D$ around the punctures, and replace with annuli that run along the components of the link intersecting $D$, to obtain a punctured fiber as in Figure \ref{vis_fibers}. The remaining part of the fiber, which is a disk, is in the surgered torus obtained from $0$--surgery on the dotted circle component. The complement of the dotted circle in $S^3$ fibers into disks, and the operation above allows us to see the other fibers of the mapping torus.

\begin{figure} 
\centering
\includegraphics[scale = 0.4]{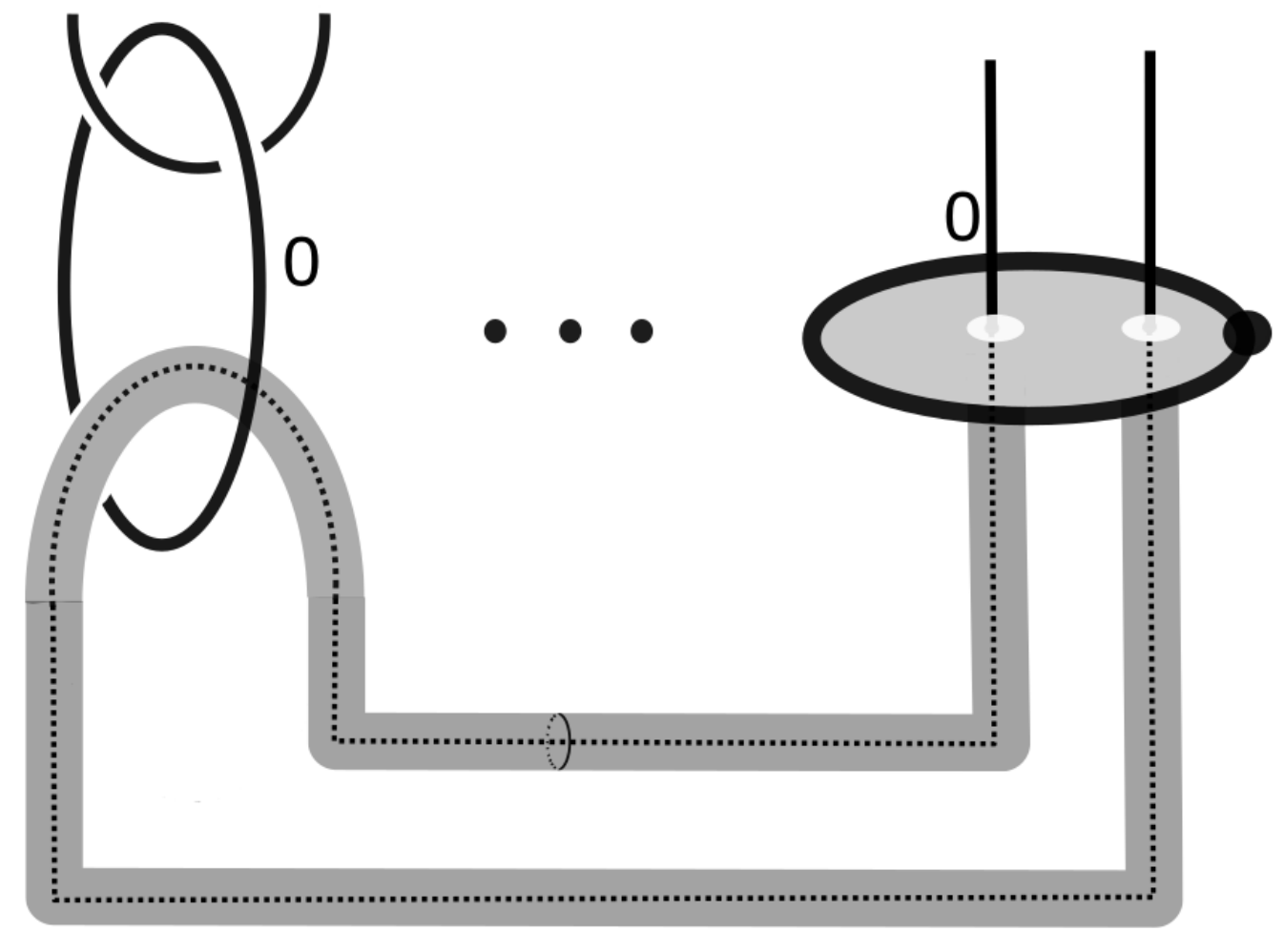}
\caption{A fiber in the surgery diagram for $M_f$ obtained from Construction \ref{constr_1}.}
\label{vis_fibers}
\end{figure}

\begin{figure}
    \centering
    \includegraphics[scale=0.6]{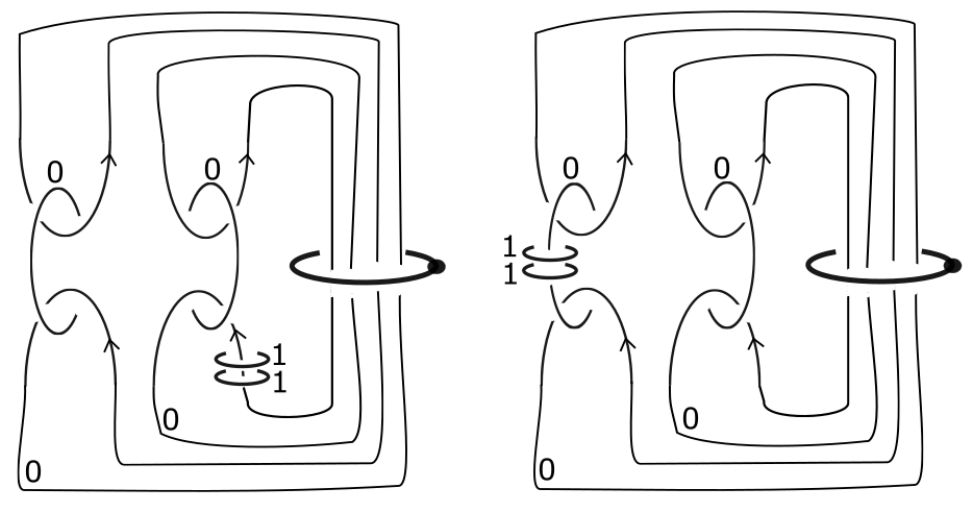}
    \caption{Surgery diagrams for $M_{t_{a_2}^2}$ and $M_{t_{b_1}^2}$ obtained from Figure \ref{tangle1}}
    \label{surgery1}
\end{figure}

\subsection{Surgery diagrams from composing tangles} \label{composing_tangles_subsection}

Surgery diagrams for Heegaard splittings and mapping tori can also be obtained by concatenating tangle diagrams; this is used later on for studying the Birman--Craggs maps.

The ribbon graphs obtained as in Figure \ref{tangle1} give surgery diagrams for $S(f) \coloneqq H_g \bigcup_f -H_g$ if we surger along the closed components of the tangle diagram. The handlebodies $\pm H_g$ are viewed as tubular neighbourhoods in $S^3$ of the $\pm \Delta_g$. This breaks $S(f)$ into three pieces, $\nu(\pm \Delta_g)$ and a copy of $I \times \Sigma_g$ cut open along a $\{ \mathrm{pt} \} \times \Sigma_g$ and reglued via $f$. If we remove tubular neighbourhoods in $S^3$ of the $\pm \Delta_g$ we get tangle diagrams for the mapping cylinder $C(f) = I \times \Sigma_g \bigsqcup \Sigma_g / \sim$, obtained by gluing $(1,x) \in \{ 1 \} \times \Sigma_g$ to $f(x) \in \Sigma_g$. Given such a ribbon graph of $S(f)$ corresponding to $C(f)$, we denote by $\Gamma_f$ the tangle diagram obtained by deleting the top and bottom coupons. The following useful result is due to Reshetikhin--Turaev \autocite[Lemma 4.4]{ReshTur2}, we include a proof of it here.

\begin{lemma}
\label{composingtangles}
Let $f,h: \Sigma_g \rightarrow \Sigma_g$ be diffeomorphisms, and let  $\Gamma_f, \Gamma_h$ be two tangle diagrams as above representing $C(f), C(h)$. Then the composition $\Gamma_f \circ \Gamma_h$ obtained by stacking $\Gamma_f$ on top of $\Gamma_h$ and then putting coupons on the ends gives a ribbon graph for $S(f \circ h)$. Here, the $g$ additional unknotted components obtained from stacking are given the $0$--framing.
\end{lemma}
\begin{proof}
    For $\Gamma_f \circ \Gamma_h$, consider the $g$ $0$--framed unknots $K_1,..,K_g$ obtained by gluing the top boundary tangles of $\Gamma_h$ to the bottom boundary tangles of $\Gamma_f$; see Figure \ref{tangle2} for an example with $g=2$. Each of the $K_i$ transversally hits a plane $\R^2 \times \frac{1}{2} \subset \R^3$, along which $\Gamma_h$ is glued to $\Gamma_f$. Complete this plane into a $2$--sphere $S^2 = \R^2 \times \frac{1}{2} \cup \{ \infty\} \subset S^3$, and take a cylinder, $S^2 \times [0, \epsilon]$, over this $2$--sphere, such that each of the $K_i$ meet this cylinder in two vertical segments $\{\mathrm{pt}\} \times [0,\epsilon]$.

    Now, surger $S^3$ along all the closed components of $\Gamma_f \circ \Gamma_h$, using the given framings, to get a closed $3$--manifold $\overline{M}$. In doing so, we cut out regular neighbourhoods $U_1,..,U_g$ of $K_1,..,K_g$, and glue in $g$ solid tori $W_1,...,W_g$; since the $K_i$ all have zero framing, we glue a meridian of $W_i$ to a zero framed longitude of $K_i$ in $U_i$. Let $U_i'$ be a regular neighbourhood of $K_i$ with $U_i \subset U_i'$, then
    $$N := (((S^2 \times [0, \epsilon]) \cup \bigcup_i U_i') - \bigcup_i U_i) \cup \bigcup_i W_i \subset \overline{M}$$
    is identified with $[0,1] \times \partial(\nu(\Delta_g))$; compare with Figure \ref{identityhandlebody}, the complement of the $\pm H_g = \nu(\pm \Delta_g)$ in $H_g \cup_{\mathrm{id}}-H_g$ is a copy of $I \times \partial(\nu(\Delta_g))$, and can be identified with $N$ by construction. 
    
    After composing the tangle diagrams and putting coupons on the two opposite ends, the resulting copies of $\pm \Delta_g$ in our ribbon graph have regular neighbourhoods in $S^3$ corresponding to $\pm H_g$. Hence, $\overline{M} \setminus \nu(\pm \Delta_g)$ can be identified with $([0,\frac{1}{3}] \times F) \bigcup_h ([\frac{1}{3}, \frac{2}{3}] \times F) \bigcup_f ([\frac{2}{3},1] \times F)$, where $F = \partial(\nu(\Delta_g))$, and $([0,\frac{1}{3}] \times F) \bigcup_h ([\frac{1}{3}, \frac{2}{3}] \times F)$ denotes that $(\frac{1}{3}, x) \in [0, \frac{1}{3}] \times F$ has been glued to $(\frac{1}{3}, h(x)) \in [\frac{1}{3}, \frac{2}{3}] \times F$, and similarly for the other union. This implies $\overline{M} \setminus \nu( \pm \Delta_g)$ is diffeomorphic to $C(f \circ h)$, so the ribbon graph obtained is a surgery diagram for $S(f \circ h)$.
\end{proof}

For an example of the constructions in the proof of Lemma \ref{composingtangles}, the tangle diagram for $C(t_{a_2}^2t_{b_1}^2)$ given in Figure \ref{tangle2} is obtained using the tangle diagrams for $C(t_{a_2}^2)$ and $C(t_{b_1}^2)$ given in Figure \ref{tangle1}. To summarise, we outline the following inductive construction: 

\begin{figure}[h]
\begin{tikzpicture}
    \centering
    \node (image) {\includegraphics[scale=0.5]{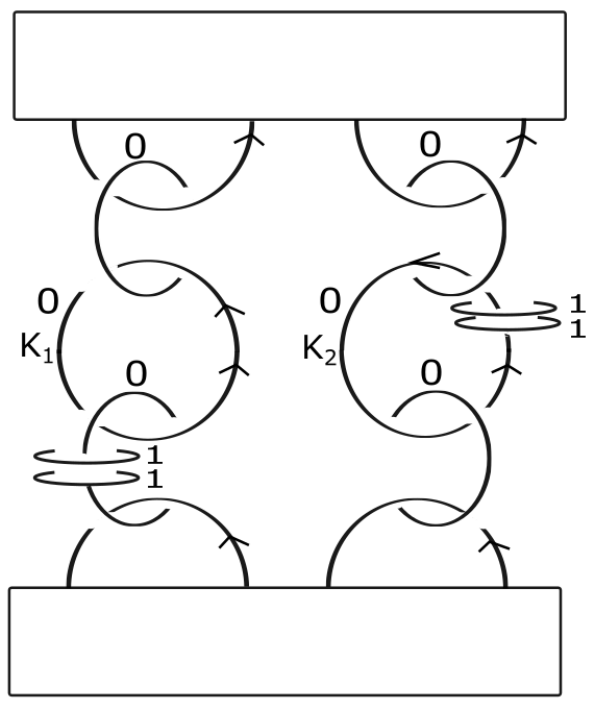}};
    \draw[decorate, decoration={brace, amplitude=10pt, raise=4pt}, thick]
        ([yshift=-35pt]image.north east) -- ([yshift=-92pt]image.north east)  
        node[midway, xshift=0.6cm, anchor=west] {$\Gamma_{t_{a_2}^2}$};
    \draw[decorate, decoration={brace, amplitude=10pt, raise=4pt}, thick]
        ([yshift=-92pt]image.north east) -- ([yshift=-150pt]image.north east)  
        node[midway, xshift=0.6cm, anchor=west] {$\Gamma_{t_{b_1}^2}$};
\end{tikzpicture}    
\caption{Surgery diagram for $S(t_{a_2}^2t_{b_1}^2)$ obtained by concatenating the tangles of Figure \ref{tangle1}}
\label{tangle2}
\end{figure}

\begin{constr} \label{constr_2}
    Let $f= f_n \cdots f_1$ be a diffeomorphism, where each $f_i$ is a composition of Dehn twists (or their inverses). Then, by concatenating tangle diagrams, we have the following inductive construction of framed link diagrams for $S(f)$ and $M_f$.
    \begin{enumerate}
        \item Use part $(2)$ of Construction \ref{constr_1} to obtain ribbon graphs of the $S(f_i)$, for $i=1,..,n$. Let $\Gamma_{f_i}$ denote the tangle diagrams obtained by deleting any coupons, as above.
        \item The composition $\Gamma_{f_n} \circ \cdots \circ  \Gamma_{f_1}$, obtained using Lemma \ref{composingtangles} inductively, with coupons added to the top and bottom, gives a ribbon graph for $S(f_n\cdots f_1)$; surgering along the closed components of $\Gamma_{f_n} \circ \cdots \circ  \Gamma_{f_1}$ gives a framed link for $S(f_n\cdots f_1)$.
        \item We obtain a framed link for $M_{f_n \cdots f_1}$ by viewing the coupons as $4$--dimensional $1$--handles, and the tangles (nonclosed components) as $4$--dimensional $2$--handles, then changing to dotted circle notation as in Figure \ref{surgery1}, and changing the dotted circle into a zero framed unknot.
    \end{enumerate}
\end{constr}

\section{Rewriting Sato's maps} \label{sato_new_def_section}

In this section, we define Construction \ref{spin_constr1}, that outputs a framed link diagram of a mapping torus, along with a characteristic sublink (defined below). This construction plays the role of Sato's map $\theta$ in Section \ref{sato_overview_section}. Construction \ref{spin_constr1} and Theorem \ref{kirby_melvin_rochlin_formula} provide an algorithm to evaluate the Rochlin invariant of spin mapping tori.

\subsection{Characteristic sublinks}

For a framed link $L$ in $S^3$, write $M_L$ for the $3$--manifold obtained via Dehn surgery on $L$. There is a $2$--handlebody $W_L$ with Kirby diagram $L$, and $\partial W_L = M_L$. The linking matrix of $L$ is also the matrix of the intersection form of $W_L$ with respect to a basis obtained from the components $L_i$ of $L$; use $\cdot$ to denote this intersection form.
\begin{defn}
\label{ffu}
Let $L$ be an oriented framed link with components $L_1,..,L_n$, and let $C$ be a sublink of $L$. Define $(w_i)_{i=1}^n \in (\Z/2)^n$ by $w_i = 1$ if $L_i$ is in $C$, and $w_i = 0$ else. Then $C$ is characteristic if
\begin{equation*}
    p_iw_i + \sum_{j \neq i}lk(L_i,L_j)w_j \equiv p_i \pmod{2},
\end{equation*}
for all $1 \leq i \leq n$. Here $p_i$ denotes the integer specifying the framing of $L_i$. We abbreviate these conditions to $C \cdot L_i \equiv L_i \cdot L_i \pmod{2}$.
\end{defn}
\begin{lemma} \label{char_sublink_bijection_spin}
\cite[Lemma C.1]{kirbymelvin1}, \cite[Prop. 5.7.11]{gompfstipsicz}
There is a natural bijection between spin structures on $M_L$ and characteristic sublinks of $L$.
\end{lemma}
\begin{proof}
The correspondence is given by taking a spin structure $s$ of $M_L$, and defining $C$ to be the union of all components $L_i$ of $L$ such that the spin structure does not extend over the $2$--handle in $W_L$ attached to $L_i$.
\end{proof}

\subsection{Rewriting Sato's constructions}

Let $L_f$ be a framed link for the mapping torus of $f$, obtained from Construction \ref{constr_1}, where $f$ is a product of squares of Dehn twists, bounding pairs, or separating twists. By Figure \ref{vis_fibers}, we see a punctured fiber $F$ for the mapping torus $M_{L_f}$, where $F$ lies in $S^3 \setminus L_f$ as the standard embedding of a surface into $S^3$. Let $L$ be the framed link obtained from Construction \ref{constr_1} ($3$) with $f = \mathrm{id}$, then $M_L \cong S^1 \times \Sigma_g$, and $L$ is a sublink of $L_f$. The framed link $L_f$ is obtained from $L$ by placing the curves appearing in the factorisation of $f$ in pushoffs of the fiber surface, and framing them using the Seifert pairing of $F \subset S^3$.

In Section \ref{sato_constructions_subsection}, for a spin structure $\sigma \in \Spin(\Sigma_g)$, we obtained a spin structure $\theta(\sigma)$ as a right splitting of the short exact sequence
\begin{equation}
    \label{framebundleses}
    0 \rightarrow H_1(SO(3)) \overset{i}{\rightarrow} H_1(P(M_{L_f})) \overset{p}{\rightarrow} H_1(M_{L_f}) \rightarrow 0.
\end{equation}
Under the splitting of $H_1(M_{L_f})$ obtained from the exact sequence (\ref{ses2}), it is given by $\hat{l} \oplus \overline{\sigma}$, where $\hat{l}$ is obtained from the fixed embedding of $S^1 \times D \hookrightarrow M_f$ in the notation of Section \ref{sato_constructions_subsection}. This construction gives a spin structure on $M_f$ that restricts to $\sigma \in \Spin(\Sigma_g)$ on a fiber, and restricts to a fixed spin structure on $S^1 \times D \subset M_f$.

For the framed link $L_f$ obtained from Construction \ref{constr_1}, we have a fixed embedding of a punctured fiber $F \subset S^3 \setminus L_f$, and a fixed embedding of $S^1 \times D \hookrightarrow S^3 \setminus L_f$ identified with a neighbourhood $\alpha$ of a meridian of the dotted circle; see Figure \ref{vis_fibers}. Let $B= \{a_i,b_i\}_{i=1}^g$ denote the standard basis for $H_1(F;\Z)$ as in Figure \ref{spbasis}. When placed in the fiber, the curves in $B$ go to meridians of the components of $L$, the sublink corresponding to $S^1 \times \Sigma_g$. We can frame these curves using a spin structure $\sigma \in \Spin(\Sigma_g)$ and a positive normal to the fiber $F$.

\begin{figure}
    \centering
    \includegraphics[scale=0.5]{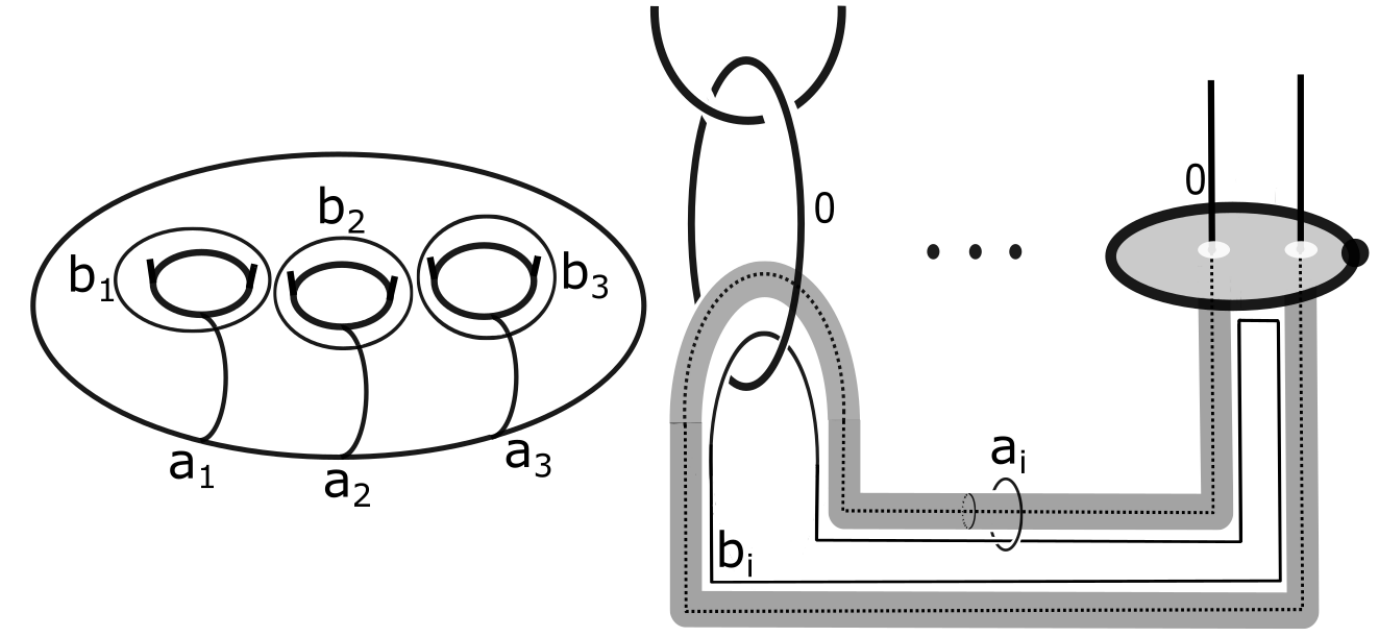}
    \caption{}
    \label{spbasis}
\end{figure}

To specify $\sigma \in \Spin(\Sigma_g)$, we use Theorem \ref{spin_strucures_quadratic_forms_bij}; spin structures on $\Sigma_g$ are in bijection with symplectic quadratic forms. Let $\sigma \in \Spin(\Sigma_g)$ have associated quadratic form $q_{\sigma}:H_1(\Sigma_g) \rightarrow \Z/2$. For an embedded circle $K \subset \Sigma_g$, the restriction of $\sigma$ to $K$ defines a spin structure on $T\Sigma_g|_K$, since $H^1(S^1) = \Z/2$, there are only two types up to homotopy. One is the bounding spin structure induced by the product framing on $D^2$. For a spin structure $\sigma \in \Spin(\Sigma_g)$, and an embedded curve $K \subset \Sigma_g$, we have that $q_{\sigma}([K]) = 0$ if and only if the spin structure $\sigma$ restricted to $K$ is spin bounding \cite[Remark 2.3]{blanchetmasbaumspin}.

\begin{lemma} \label{spin_bounding_implies_not_in}
    Let $\mu_j \in B \subset F$ be a meridian of a component $L_j$ of $L$ that doesn't come from a dotted circle, and let $\sigma \in \Spin(\Sigma_g)$. If $q_{\sigma}([\mu_j])=0$, then the spin structure on $M_{L_f}$ extends over the $2$--handle in $W_{L_f}$ attached along $L_j$.
\end{lemma}
\begin{proof}
    Since $q_{\sigma}([\mu_j])=0$, the spin structure $\sigma$ restricted to $\mu_j$ is spin bounding. As noted in Section \ref{framing_2_handle_subsection}, after surgery along $L_j$, the meridian $\mu_j$ can be isotoped to the core $\{0\} \times \partial D^2$ of the $2$--handle $h = D^2 \times D^2$ attached along $L_j$. The spin structure restricted to $h$ has to be the product framing, which agrees with the spin bounding structure on $\mu_j$.
\end{proof}

Using Lemmas \ref{char_sublink_bijection_spin} and \ref{spin_bounding_implies_not_in}, we define the characteristic sublink of $L_f$ in the following way: we set the components $L_i$ of $L$, with meridians $\mu_i \in B$ in the fiber $F$, satisfying $q_{\sigma}(\mu_i) =1$ to be in the characteristic sublink of $L_f$. The dotted circle component of $L_f$ is never in the characteristic sublink, this corresponds to fixing the spin structure on the embedding of $S^1 \times D$ given by the meridian $\alpha$. The conditions of definition \ref{ffu} determine whether the additional components of $L_f$ are in the characteristic sublink, by the following.

\begin{prop} \label{mon_comp_determined}
    Let $f: \Sigma_g \rightarrow \Sigma_g$ be a product of squares of Dehn twists, bounding pairs, or separating twists. Let $L_f$ be the framed link for $M_f$ obtained from Construction \ref{constr_1}, and let $L$ be the sublink corresponding to $S^1 \times \Sigma_g$. Then once we have defined which components of $L$ are in the characteristic sublink, the conditions of definition \ref{ffu} uniquely determine if the additional components of $L_f$ are in the characteristic sublink.
\end{prop}
\begin{proof}
    We prove the case where $f$ is a product of squares. Let $L_f$ have components $L_1,...,L_n$, and suppose that the components $L_i, L_{i+1}$ correspond to a $t_c^{\pm2}$ factor of the monodromy $f$, via Construction \ref{constr_1} (2). Then $p_i = p_{i+1} = \lambda(c,c) \pm 1$, where $\lambda$ is the Seifert pairing for the punctured fiber surface $F$ that lies in $S^3 \setminus L_f$. Applying the relations of definition \ref{ffu}, we have
    \begin{equation*}
        p_iw_i +(p_i-1)w_{i+1} + \sum_{j \neq i,i+1}lk(L_i,L_j)w_j \equiv p_i \pmod{2}.
    \end{equation*}
    The components $L_i$ and $L_{i+1}$ have the same linking number up to signs with all other components, since they are isotopic to each other in the fiber that lies ambiently in $S^3$, so applying definition \ref{ffu} to $L_{i+1}$ we get
    \begin{equation*}
        p_iw_{i+1} + (p_i-1)w_i + \sum_{j \neq i,i+1}lk(L_{i+1}, L_j)w_j \equiv p_iw_{i+1} + (p_i-1)w_i + \sum_{j \neq i,i+1}lk(L_i, L_j)w_j \equiv p_{i+1} \equiv p_i \pmod{2}.
    \end{equation*}
    But then 
    \begin{equation*}
        w_i \equiv w_{i+1} \equiv p_i -\sum_{j \neq i,i+1}lk(L_i,L_j)w_j \pmod{2}.
    \end{equation*}
    The result follows since the other components corresponding to the monodromy come in pairs, so they contribute zero to the right hand side of the last equation. The general case with bounding pair and separating twist factors involved follows from a similar argument, since the linking number of the components of $L_f$ is determined by the Seifert pairing on homology.
\end{proof}

For the symplectic quadratic form $q_{\sigma}: H_1(\Sigma_g) \rightarrow \Z/2$ associated to $\sigma \in \Spin(\Sigma_g)$, the affine action of $x \in \Hom(H_1(\Sigma_g),\Z/2) = H^1(\Sigma_g)$ on $\sigma$ satisfies the following identity: $q_{\sigma +x} = q_{\sigma}+x$ \cite[Theorem 3A]{johnsonspin}. For example, if $x \in H^1(\Sigma_g)$ satisfies $x([c]) =1$ for $c \in B$, then $q_{\sigma +x}([c]) = q_{\sigma}([c]) + 1$, so we switch whether the component of $L$ with meridian $c$ is in the characteristic sublink.

In summary, we have the following construction; recall any element of $\Mod_{g,1}[2]$ can be written as a product of squares of Dehn twists \cite[Prop.2.1]{humphriesfactorisation}.

\begin{constr} \label{spin_constr1}
Let $[f] \in \Mod_{g,1}[2]$ be a mapping class, written as a product of squares of Dehn twists.
\begin{enumerate}
    \item We start with a framed link $L$ representing $S^1 \times \Sigma_g$, obtained by applying Construction \ref{constr_1} $(3)$ to $f = \mathrm{id}$. A punctured fiber surface $F$ for $\{ \mathrm{pt} \} \times \Sigma_g$ is visible in $S^3 \setminus L$ as in Figure \ref{vis_fibers}. Construction \ref{constr_1} gives a framed link $L_f$ representing $M_f$, obtained by placing the curves appearing in the factorisation of $f$ in pushoffs of the fiber surface $F$ in $S^3 \setminus L$, and framing these curves using the Seifert pairing; $L_f$ has $L$ as a sublink.
    \item For $\sigma \in \Spin(\Sigma_g)$ with associated quadratic form $q_{\sigma}:H_1(\Sigma_g) \rightarrow \Z/2$, define a map $\Spin(\Sigma_g) \rightarrow \{$Characteristic sublinks of $L\}$ in the following way: a component $L_j$ of $L$ with meridian $\mu_j \in B$ in the fiber $F$, is in the characteristic sublink if and only if $q_{\sigma}(\mu_j) = 1$. The dotted circle component of $L$ is never in the characteristic sublink.
    \item Denote our map $\theta: \Spin(\Sigma_g) \rightarrow \{ \mathrm{Characteristic} \; \mathrm{sublinks} \; \mathrm{of} \; L_f \}$ by $\theta_{L_f}$ to indicate the dependence on the monodromy $f$, and our chosen framed link $L$ for $S^1 \times \Sigma_g$. Then $\theta_{L_f}(\sigma)$ is defined by declaring which components of $L$ are in the characteristic sublink as in ($2$), and the relations of definition \ref{ffu} determines if the additional components of $L_f$ are in the characteristic sublink, by Proposition \ref{mon_comp_determined}.
\end{enumerate}
\end{constr}

For two different factorizations of $[f] \in \Mod_{g,1}[2]$ into products of squares of Dehn twists, the spin mapping tori obtained from Construction \ref{spin_constr1} are spin diffeomorphic.

\begin{theorem} \label{well_defined_sato}
The map that sends $[f] \in \Mod_{g,1}[2]$ to the spin diffeomorphism class of $(L_f, \theta_{L_f}(\sigma))$, where $(L_f, \theta_{L_f}(\sigma))$ is obtained via Construction \ref{spin_constr1}, is well--defined.
\end{theorem}
\begin{proof} Suppose, for example, that $f = t_{c_n}^2\cdots t_{c_1}^2, h = t_{d_m}^2 \cdots t_{d_1}^2$, and that $[f]=[h]$ in the mapping class group. In the construction above, the monodromy is modified via Dehn surgery in a single tubular neighbourhood of the punctured fiber surface $F$ obtained from $\partial(\nu(\Delta_g))$ after switching to dotted circle notation (see Figure \ref{vis_fibers}). Identify this neighbourhood with $I \times F$, and set $N(f)$ to be the manifold obtained from $I \times F$ by modifying the monodromy by Dehn surgery along the curves $c_i \subset t_i \times F$ as in Construction \ref{constr_1} (2). Define $N(h)$ similarly.

We isotope the pair of surgery components corresponding to the $t_{c_i}^2$ part of the gluing to lie on the same pushoff of $F$, then $N(f) = ([0,\frac{1}{n+1}] \times F) \cup_{t_{c_1}^2} ([\frac{1}{n+1}, \frac{2}{n+1}]\times F) \cup_{t_{c_2}^2} \cdots \cup_{t_{c_n}^2}([\frac{n}{n+1},1] \times F)$. Here, $([\frac{k-1}{n+1}, \frac{k}{n+1}] \times F) \cup_{t_{c_k}^2} ([\frac{k}{n+1}, \frac{k+1}{n+1}] \times F)$ means that $(\frac{k}{n+1},x) \in [\frac{k-1}{n+1}, \frac{k}{n+1}] \times F$ has been glued to $(\frac{k}{n+1}, t_{c_k}^2(x)) \in [\frac{k}{n+1}, \frac{k+1}{n+1}] \times F$. To see this, note that performing Dehn surgery along $c_i \subset \{t_i\} \times F$, using the framing $\lambda(c_i,c_i) \pm 1$ coming from the Seifert pairing of $F \subset S^3$, is equivalent to cutting along $\{t_i\} \times F$, and regluing via $t_{c_i}^{\pm 1}$, see Figure \ref{stalltwist}.

We have a map $\psi: N(f) \rightarrow C(f) := I \times F \sqcup F / (1,x) \sim f(x)$, defined on the decomposition of $N(f)$ above, as $\psi|_{[0,\frac{1}{n+1}]\times F} = \mathrm{id}\times \mathrm{id}$, and $\psi|_{[\frac{k}{n+1}, \frac{k+1}{n+1}] \times F} = \mathrm{id}\times (t_{c_1}^{-2} \cdots t_{c_k}^{-2})$, for $k>0$. This map has inverse $\psi^{-1}: C(f) \rightarrow N(f)$, where $\psi^{-1}|_{[\frac{k}{n+1}, \frac{k+1}{n+1}]\times F} = \mathrm{id} \times (t_{c_k}^2 \cdots t_{c_1}^2)$ for $k>0$, and $\psi^{-1}|_{[0,\frac{1}{n+1}]\times F} = \mathrm{id}\times \mathrm{id}$ on the $I \times F$ part of $C(f)$, and $\psi^{-1}(x) = (1,x)$, for $x \in F \subset C(f)$.

Since $[f] = [h]$, there is a map $H:I \times F \rightarrow I \times F$ given as $H(s,x) = (s, H_s(x))$, where $H_s$ is an isotopy with $H_0 = \mathrm{id}_F$, and $H_1 = f^{-1}h$. This defines a map $\phi: C(h) \rightarrow C(f)$, by $\phi(s,x) = H(s,x)$ for $(s,x) \in I \times F$, and $\phi(x) = x$, for $x \in F$. The map $G: I \times F \rightarrow I \times F$ given by $G(s,x) 
= (s, H_s^{-1}(x))$ induces an inverse map to $\phi$.

The maps of the previous two paragraphs descend to maps between mapping tori; the $3$--manifold specified by $L_f$ via Construction \ref{spin_constr1} can be identified with $N(f) / (1,x) \sim (0,x)$, and the abstract mapping torus $M_f$ can be identified with $C(f) / (1,x) \sim (0,f(x))$. We have $\psi(1,x) = (1,f^{-1}(x)) \sim (0,f \circ f^{-1}(x)) =(0,x)= \psi(0,x)$. The inverse of $\psi$ also induces a diffeomorphism between the mapping tori by a similar argument. The $\phi^{\pm 1}$ also descend to diffeomorphisms of mapping tori, since $\phi(1,x) = (1, f^{-1}h(x)) \sim (0, ff^{-1}h(x)) = (0,h(x)) = \phi(0,h(x))$ when $\phi$ is projected to the mapping torus using the identifications above.

Composing these diffeomorphisms gives a diffeomorphism between the mapping tori specified by $L_f$ and $L_h$. Locally, the total derivative of this diffeomorphism is of the form $\mathrm{id}_{\R} \oplus de$, for some $e \in \mathrm{Diff}(F)$ which preserves the spin structure restricted to $F$, and fixes pointwise a neighbourhood of the meridian of the dotted circle component. By comparing the action of this diffeomorphism on the spin structures obtained using the exact sequence (\ref{framebundleses}), we see that the spin mapping tori $(L_f, \theta_{L_f}(\sigma))$ and $(L_h, \theta_{L_h}(\sigma))$ obtained from Construction \ref{spin_constr1} are spin diffeomorphic, for fixed $\sigma \in \Spin(\Sigma_g)$.
\end{proof}
The proof of Theorem \ref{well_defined_sato} also works for the case where $f$ is a product of squares, bounding pairs, or separating twists.

\subsection{Sato's homomorphisms}

Denote by $\Map(H_1(\Sigma_g), \Z/8)$ the free $\Z/8$--module consisting of all functions $H_1(\Sigma_g) \rightarrow \Z/8$, and recall $H_1(\Sigma_g)= H_1(\Sigma_g;\Z/2)$. Define
$$\beta_{\sigma}: \Mod_{g,1}[2] \rightarrow \Map(H_1(\Sigma_g),\mathbb{Z}/8)$$ by $\beta_{\sigma}([f])(x) = \beta_{\sigma, PD(x)}([f])$. In this section, we prove that the maps $\beta_{\sigma}$ are homomorphisms using a formula for computing the Rochlin invariant from a surgery diagram with a labelled characteristic sublink. The proof sheds light on why it is a homomorphism on the subgroup generated by squares of Dehn twists, which coincides with $\Mod_{g,1}[2]$ \cite[Prop.2.1]{humphriesfactorisation}.

We use Construction \ref{spin_constr1}, and think of $\theta_{L_f}$ as a map from $\Spin(\Sigma_g)$ to framed links with characteristic sublink $(L_{f},\theta_{L_f}(\sigma))$, such that the corresponding spin manifold is spin diffeomorphic to $(M_{f}, \theta(\sigma)$). We use the following formula for the Rochlin invariant of $(L_{f},\theta_{L_f}(\sigma))$, found by Kirby--Melvin.

\begin{theorem} \label{kirby_melvin_rochlin_formula} \cite[(C.3), Theorem C.4, Corollary C.5]{kirbymelvin1}
    Let $(L,C)$ denote a framed link $L$, with a characteristic sublink $C$ as in definition \ref{ffu}. Suppose $L$ is oriented, then the Rochlin invariant of the corresponding spin manifold is given by
    \begin{equation*}
        R(L,C) = \Lambda_L - C \cdot C + 8 \Arf (C) \pmod{16},
    \end{equation*}
    where $\Lambda_L$ is the signature of the linking matrix of $L$, $C \cdot C$ is the sum of all entries in the linking matrix of $C$, and $\Arf(C) \in \Z/2$ is the Arf invariant of $C$, defined below. The formula above gives a spin diffeomorphism invariant for spin $3$--manifolds, and in particular, the formula above is independent of the choice of orientations for the components of $L$.
\end{theorem}

\subsection*{The Arf invariant of a proper link} 

We call a link $L$ proper if $lk(K, L - K)$ is even for every component $K$ of $L$. Let $F$ be an oriented Seifert surface for $L$, and let $i:L \rightarrow \partial F$ denote the inclusion. Then $\Img(i_*)$ is the radical of the intersection form on $H_1(F)$ and the Seifert self--linking form 
\begin{align*}
\lambda : H_1(F)\rightarrow \mathbb{Z}/2 \\
[a] \mapsto \mathrm{lk}(a,a^+)
\end{align*}
satisfies $\lambda|_{\Img(i_*)} = 0$ if and only if $L$ is a proper link. We can define $\Arf(L)$ to be the Arf invariant of the form on $H_1(F)/\Img(i_*)$ induced by $\lambda$.

For a proper link $L$, we can produce a knot $K$ by band connecting together all the components of $L$, as long as the bands respect the orientations chosen for the components of $L$. It is a fact that $\Arf(L) = \Arf(K)$; see \autocite{robertelloarf} or \autocite{hostearf} for more details.

\subsection*{Another definition of Sato's maps} 

Using Theorem \ref{kirby_melvin_rochlin_formula}, we write the Rochlin invariant of the spin mapping torus $(L_f, \theta_{L_f}(\sigma))$ obtained from Construction \ref{spin_constr1} as
\begin{equation}
    \label{rochlininvformula}
    R(L_f, \theta_{L_f}(\sigma)) = \Lambda_{L_f} - \theta_{L_f}(\sigma) \cdot \theta_{L_f}(\sigma) + 8\Arf(\theta_{L_f}(\sigma)) \pmod{16}.
\end{equation}
Now, substitute formula (\ref{rochlininvformula}) into Sato's maps, to get:

\begin{lemma}\label{satomapsnew}
    Let $\sigma \in \Spin(\Sigma_g)$ be a spin structure, let $x \in H^1(\Sigma_g)$, and let $[f] \in \Mod_{g,1}[2]$. Let $(L_f, \theta_{L_f}(\sigma))$ and $(L_f, \theta_{L_f}(\sigma + x))$ denote the spin mapping tori obtained from Construction \ref{spin_constr1}, then
\begin{equation*}
    \beta_{\sigma, x}([f])=(\theta_{L_f}(\sigma + x) \cdot \theta_{L_f}(\sigma +x) - \theta_{L_f}(\sigma) \cdot \theta_{L_f}(\sigma) +8(\Arf(\theta_{L_f}(\sigma))-\Arf(\theta_{L_f}(\sigma +x))))/2 \pmod{8}.
\end{equation*}
Here $\theta_{L_f}(\sigma) \cdot \theta_{L_f}(\sigma)$ denotes the sum of all the entries in the linking matrix of the characteristic sublink of $\theta_{L_f}(\sigma)$, and $\Arf(\theta_{L_f}(\sigma)) \in \{ 0, 1 \}$ is the Arf invariant of the characteristic sublink of $\theta_{L_f}(\sigma)$. This formula is well--defined by Theorems \ref{kirby_melvin_rochlin_formula} and \ref{well_defined_sato}, and is independent of the orientations chosen for the components of $L_f$.
\end{lemma}

We choose orientations for $L_f$ to simplify calculations for the terms in the above formula; examples of these orientation choices are given in the proof of Lemma \ref{satoishom} below. Using our description of $\theta_{L_f}$, a spin structure for these surgery diagrams is given by declaring which curves in the framed link $L$ representing $S^1 \times \Sigma_g$ are in the characteristic sublink; the defining relations $C \cdot L_i \equiv L_i \cdot L_i \pmod{2}$ of definition \ref{ffu} determines whether the additional components of $L_f$ are in the characteristic sublink, by Proposition \ref{mon_comp_determined}. For Figure \ref{method1}, giving this diagram a spin structure is equivalent to declaring whether the zero framed components are in the characteristic sublink. The dotted circle component is never in the characteristic sublink for our map $\theta_{L_f}$.

\begin{figure}[h]
\centering
\includegraphics[scale=0.5]{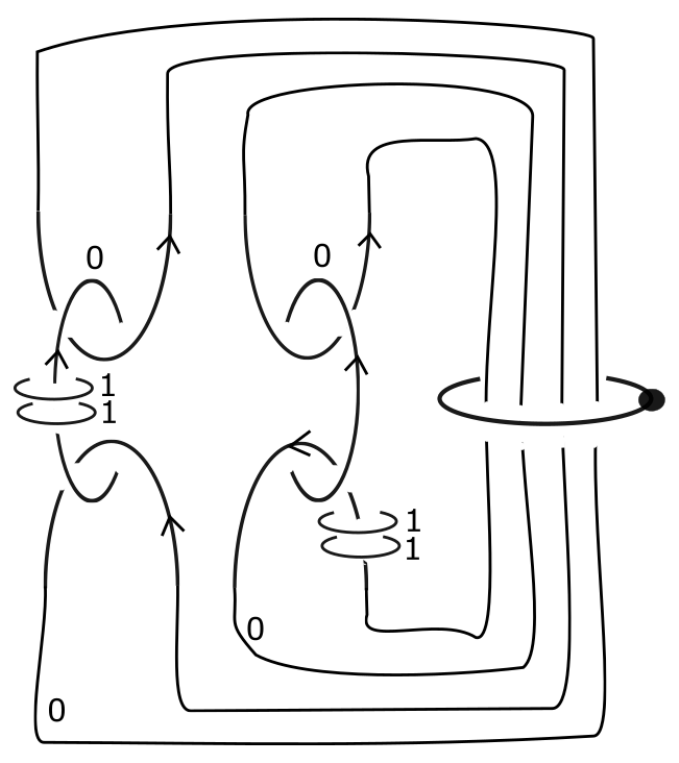}
\caption{Surgery diagram for $M_{t_{a_2}^2t_{b_1}^2}$ obtained from Construction \ref{constr_1}, where $a_2$ and $b_1$ are the curves of Figure \ref{tangle1}}
\label{method1}
\end{figure}

\begin{lemma}
\label{satoishom}
Let $g \geq 3$, let $\sigma \in \Spin(\Sigma_g)$ be a spin structure, and let $\beta_{\sigma}:\Mod_{g,1}[2] \rightarrow \Map(H_1(\Sigma_g),\mathbb{Z}/8)$ denote Sato's maps, where $\beta_{\sigma}([f])(x) = \beta_{\sigma, PD(x)}([f])$, for $PD(x) \in H^1(\Sigma_g)$, and $\beta_{\sigma, PD(x)}([f])$ is given by Lemma \ref{satomapsnew}. Then the maps $\beta_{\sigma}$ are homomorphisms.
\end{lemma}
\begin{proof}
The set $S = \{ t_c^{\pm 2} \; | \; c \; \mathrm{nonseparating} \; \mathrm{simple} \; \mathrm{closed} \; \mathrm{curve} \}$ is a generating set for $\Mod_{g,1}[2]$. Let $x \in H^1(\Sigma_g)$,
we will show that $\beta_{\sigma, x}(f_1\cdots f_n) = \sum_{i=1}^n \beta_{\sigma,x}(f_i)$, for $f_i \in S$. We have a framed link $L$ for $S^1 \times \Sigma_g$ described in Construction \ref{spin_constr1}. We then place curves corresponding to $f_1,..,f_n$ in pushoffs of a fiber surface $F$ that lies in $S^3 \setminus L$.  Note that the framed link $L$ has linking matrix the zero matrix, hence components of $L$ do not contribute to the $\theta_{L_f}(\sigma) \cdot \theta_{L_f}(\sigma)$ terms in Lemma \ref{satomapsnew}.

The components corresponding to the monodromy come in pairs: let $L_c,L_c'$ be two components corresponding to a $t_c^{\epsilon_c}$ factor of $f$, where $\epsilon_c = \pm 2$. By Construction \ref{spin_constr1}, $L_c$ and $L_c'$ can be isotoped to lie in a single fiber surface such that they bound an annulus in this fiber. Now the relations
\begin{equation*}
    p_iw_i + \sum_{j \neq i} lk(L_i, L_j)w_j \equiv p_i \pmod{2}
\end{equation*}
of definition \ref{ffu} applied to $L_c,L_c' \subset L_f$ imply that $L_c$ and $L_c'$ are either both in, or both out, of the characteristic sublink for $\theta_{L_f}(\sigma)$ by Proposition \ref{mon_comp_determined}.

Orient $L_c$ and $L_c'$ oppositely in the fiber, then $L_c$ and $L_c'$ contribute $\epsilon_c$ to the $\theta_{L_f}(\sigma) \cdot \theta_{L_f}(\sigma)$ terms if they are both in the characteristic sublink, and $0$ otherwise; let $\lambda:H_1(F;\Z) \times H_1(F;\Z) \rightarrow \Z$ denote the Seifert linking form for the punctured fiber $F \subset S^3$. In the case $\beta_{\sigma,x}(t_c^{\epsilon_c})$, the linking matrix of the framed link obtained from Construction \ref{spin_constr1}, with the orientation convention given above, is
\begin{equation*}
\begin{bmatrix}
    0 & \dots & \dots & 0 & 0  & 0 \\
    0 & \dots & \dots & 0 & l_1  & -l_1 \\
    0 & \dots & \dots & 0 & l_2 & -l_2 \\
    0 & \ddots & \dots  & 0 & \vdots & \vdots \\
    0 & \dots & \dots & 0 & l_{2g} & -l_{2g} \\
    0 & l_1 & \dots & l_{2g} & m+\epsilon_c/2 & -m \\
    0 & -l_1 & \dots & -l_{2g} & -m & m+\epsilon_c/2 \\
\end{bmatrix},
\end{equation*}
where $m = \lambda(c,c)$, and the $l_i$ are given by the linking numbers between $L_c$ and the components of $L$. For the general case, pairs of rows and columns are adjoined to this linking matrix, corresponding to modifying the monodromy by squares of Dehn twists. Suppose that $t_c^{\epsilon_c}$ and $t_{d}^{\epsilon_d}$ are factors of the monodromy, then the relevant block of the linking matrix corresponding to the components $L_c,L_c',L_d,L_d'$ in the notation/orientation conventions above, has the form
\begin{equation*}
\begin{bmatrix}
    \lambda(c,c)+\epsilon_c/2 & -\lambda(c,c) & l & -l \\
    -\lambda(c,c) & \lambda(c,c) + \epsilon_c/2 & -l & l \\
    l & -l & \lambda(d,d)+ \epsilon_d/2 & -\lambda(d,d) \\
    -l & l & -\lambda(d,d) & \lambda(d,d)+ \epsilon_d/2
\end{bmatrix},
\end{equation*}
where $l = \lambda(c,d)$ or $\lambda(d,c)$, after fixing orientations. So linking with other components corresponding to the monodromy has no effect on the $\theta_{L_f}(\sigma) \cdot \theta_{L_f}(\sigma)$ terms. The proof of Proposition \ref{mon_comp_determined} also implies that whether $L_c$ and $L_c'$ are in the characteristic sublink only depends on the components of $L$, and not on other components corresponding to the monodromy. So the $\theta_{L_f}(\sigma) \cdot \theta_{L_f}(\sigma)$ terms are additive with respect to products of squares of Dehn twists.

For the Arf invariant terms in Lemma \ref{satomapsnew}, note that the Arf invariant is preserved under orientation--preserving band sums. In the convention above, $L_c$ and $L_c'$ bound an annulus, and are oriented oppositely, so after band summing $L_c$ with $L_c'$, we get an unknot that can be isotoped to be disjoint from the rest of the link. Apply this to all pairs of sublinks corresponding to the monodromy, and note that the sublink of $L$ that is in the characteristic sublink is a disjoint union of unknots, since the dotted circle component is never in the characteristic sublink. Therefore, the Arf invariant terms are always zero in Lemma \ref{satomapsnew}.
\end{proof}

A formula for $\beta_{\sigma}$ on squares of Dehn twists on nonseparating curves follows from the proof of Lemma \ref{satoishom}. This formula is analogous to Sato's \autocite[Proposition 5.2]{sato}.
\begin{customcor}{1}
\label{maincor1}
Let $c$ be a nonseparating simple closed curve in $\Sigma_{g,1}$ then we have that 
\begin{equation*}
    \beta_{\sigma, x}(t_c^2) = 
   \left\{
\begin{array}{ll}
      0 , & c_1 \cup c_2 \in \theta_{L_{t_c^2}}(\sigma) \: \mathrm{and} \: \in \theta_{L_{t_c^2}}(\sigma + x) \\
      0 , & c_1 \cup c_2 \notin \theta_{L_{t_c^2}}(\sigma) \: \mathrm{and} \: \notin \theta_{L_{t_c^2}}(\sigma + x) \\
      1 , & c_1 \cup c_2 \in \theta_{L_{t_c^2}}(\sigma + x) \: \mathrm{and} \: \notin \theta_{L_{t_c^2}}(\sigma) \\
      -1 , & c_1 \cup c_2 \in \theta_{L_{t_c^2}}(\sigma) \: \mathrm{and} \: \notin \theta_{L_{t_c^2}}(\sigma + x) ,
\end{array} 
\right.
\end{equation*}
where $c_1$ and $c_2$ are pushoffs of $c$ in the fiber surface, framed using the Seifert pairing via Construction \ref{spin_constr1}, and $c_1 \cup c_2 \in \theta_{L_{t_c^2}}(\sigma)$ denotes that the components corresponding to the monodromy are in the characteristic sublink for Construction \ref{spin_constr1} applied to $\sigma \in \Spin(\Sigma_g)$.
\end{customcor}

\section{The Birman--Craggs maps} \label{birman_craggs_section}
In this section, we use our framework to calculate the Birman--Craggs maps, and to give a proof that these maps are homomorphisms. Then, we calculate Sato's maps on elements of the Torelli group, and find relations between Sato's maps and the Birman--Craggs maps.

\subsection{The Birman--Craggs homomorphisms.}

Birman and Craggs \autocite{birmancraggs} associated to every element $k \in \mathcal{I}_{g,1}$ a $3$-manifold $M(k)$ defined via a Heegaard splitting. They proved that taking the Rochlin invariant produces a family of homomorphisms from the Torelli group to $\mathbb{Z}/2$. Johnson then reformulated the family of homomorphisms in the following way \autocite[Section 5 and 6]{BCJpaper}.

Let $h: \Sigma_g \rightarrow S^3$ be a Heegaard embedding. Split $S^3$ along $h(\Sigma_g)$ into two handlebodies $A$ and $B$. Take $k \in \mathcal{I}_{g,1}$ and reglue $A$ to $B$ along their boundaries by the map $k$ to get the closed $3$-manifold $M(h,k)$. Since $k$ acts trivially on the homology of the Heegaard surface, we have that $M(h,k)$ is a homology $3$-sphere, we then take the Rochlin invariant of the unique spin structure
\begin{equation*}
    \mu(h,k) = R(M(h,k))/8 \pmod{2}.
\end{equation*}
This rewrites every Birman--Craggs homomorphism in the form $\mu(h,-): \mathcal{I} \rightarrow \mathbb{Z}/2$. Johnson was able to enumerate all the maps $\mu(h,-)$ using the Seifert pairing induced by the Heegaard embedding $h$. He collected all these maps into one homomorphism called the Birman--Craggs--Johnson map \autocite[Section 9]{BCJpaper}. This map determines the torsion part of the abelianization of the Torelli group. 

We begin with a model for computing the Birman--Craggs maps using the formula in Theorem \ref{kirby_melvin_rochlin_formula}
\begin{equation}
    \label{combinatorialrochlininv}
    R(L,C) = \Lambda_L - C\cdot C+8\Arf(C) \pmod{16}.
\end{equation}
Here $L$ is a framed link for $M(h,k)$, $C$ is the unique characteristic sublink, $\Lambda_L$ denotes the signature of the linking matrix of $L$, $C \cdot C$ denotes the sum of all entries in the linking matrix of $C$, and $\Arf(C)$ denotes the Arf invariant of $C$.
\subsubsection*{A model for calculating the Birman--Craggs maps.}
First, we describe a Heegaard splitting of $S^3$. Take handlebodies $\pm H_g = \nu(\pm \Delta_g)$, and fix $\Sigma_g = \partial ( \nu (\Delta_g))$ as in Section \ref{heegaard_splitting_subsection}. Let $\{a_i, b_i \}$ denote the standard symplectic basis for $H_1(\Sigma_g ; \mathbb{Z})$, as pictured in Figure \ref{spbasis}. We fix our Heegaard splitting for $S^3$ to be 
\begin{equation*}
    S^3 = H_g \bigcup_{i_g} -H_g = S(i_g),
\end{equation*}
where $i_g \coloneqq \prod_{j=1}^{g}t_{b_j}t_{a_j}t_{b_j}$. Then, 
for any $k \in \mathcal{I}_{g,1}$, we set
\begin{equation*}
    M(h,k) = H_g \bigcup_{i_g \circ k} -H_g = S(i_g \circ k),
\end{equation*}
where $h:\Sigma_g \rightarrow S^3$ denotes the inclusion map. 

We fix our Heegaard embedding $h$ to be the inclusion from now on, and we denote the $3-$manifold $M(h,k)$ described above by $V(k)$.

\begin{figure}
    \centering
    \includegraphics[scale=0.4]{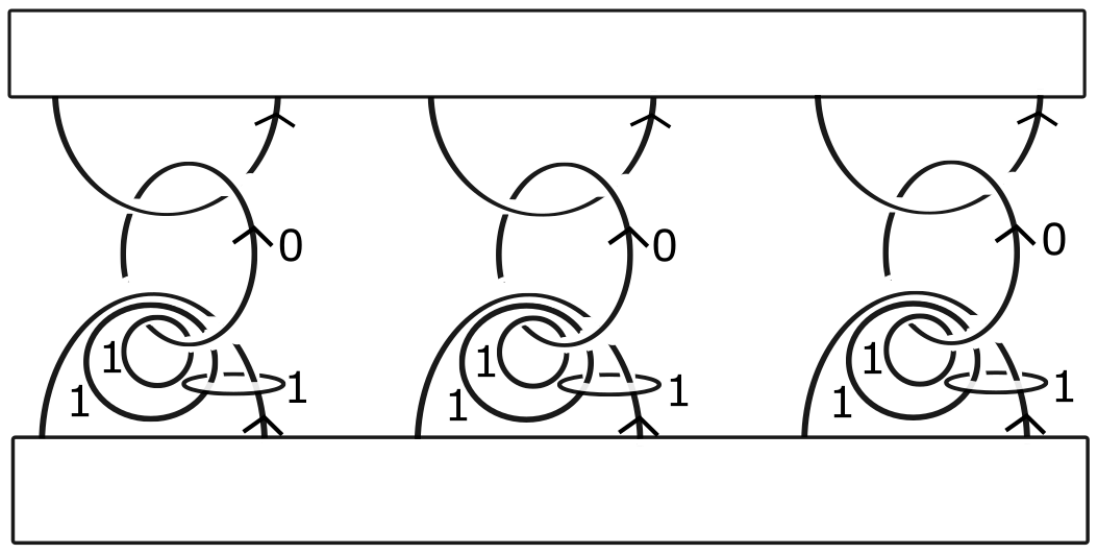}
    \caption{}
    \label{threespheretangle}
\end{figure}
Using Construction \ref{constr_1} (2), we find that our model for $S^3$ is given by Figure \ref{threespheretangle}. 

By Johnson, the Torelli group $\mathcal{I}_{g,1}$ is generated by \textit{bounding pair maps} \autocite[Theorem 1]{johnsonbp_genus_1_generates}. Here a bounding pair $d_1, d_2$ is a pair of simple closed curves on the surface which bound a nontrivial subsurface, and the bounding pair map is given by $t_{d_1}t_{d_2}^{-1}$. Suppose that $k \in \mathcal{I}_{g,1}$ is given as a product of bounding pair maps. We use the method of composing tangles as in Construction \ref{constr_2} (2) to compute $R(V(k))$, as below.

\begin{constr} \label{hom_sphere_constr}
    Suppose $k = f_{n-1} \cdots f_1: \Sigma_g \rightarrow \Sigma_g$ is a diffeomorphism, where each $f_i$ is a bounding pair map, or its inverse.
    \begin{enumerate}
        \item Use Construction \ref{constr_2} (2) with $f = f_nf_{n-1} \cdots f_1$, where $f_n = \iota_g = \prod_{j=1}^g t_{b_j}t_{a_j} t_{b_j}$, and $f_i$ as above for $i \neq n$, to obtain a tangle diagram for $V(k)$. Let $L$ denote the closed components of this diagram; surgery along $L$ gives $V(k)$.
        \item Let $\overline{L}_{g,n}$ denote the closed components obtained from Construction \ref{constr_2} (2) applied to $f_n = \iota_g$ and $f_i = \mathrm{id}$ for $i=1,..,n-1$. Then surgery on $\overline{L}_{g,n}$ gives $S^3$, and $\overline{L}_{g,n}$ is always a sublink of $L$. The relations of definition \ref{ffu} imply that the unique characteristic sublink of $\overline{L}_{g,n}$ is the union of the components corresponding to the $t_{b_j}$ factors of $\iota_g$ in the surgery diagram for $S(\iota_g)$ obtained from Construction \ref{constr_2} (1); see Figure \ref{bcfiguretwo} for the $n=g=3$ case, and compare this with Figures \ref{spbasis} and \ref{threespheretangle}. 
        \item The components of $L - \overline{L}_{g,n}$ come in pairs corresponding to the bounding pairs $f_i$, $i=1,..,n-1$ via Construction \ref{constr_2} (1). Let $L_i,L_{i+1}$ be a pair of components of $L$ corresponding to the bounding pair $t_dt_e^{-1}$ in the factorisation of $k$. Now $[d] = [e] \in H_1(\Sigma_g;\Z)$, and the linking numbers of $L_i, L_{i+1}$ with the other components is determined by the Seifert pairing $\lambda$ for $\Sigma_g = \partial( \nu (\Delta_g)) \subset S^3$. So $L_i$ and $L_{i+1}$ have the same linking number with the other components of $L$ up to sign. Using definition \ref{ffu} we have $p_iw_i + (p_i-1)w_{i+1} + \sum_{j \neq i,i+1} lk(L_i,L_j) w_j \equiv p_i \pmod{2}$ and $p_{i+1} w_{i+1} + (p_{i+1}-1)w_i + \sum_{j \neq i, i+1} lk(L_i, L_j)w_j \equiv p_{i+1} \pmod{2}$. By (2), the components of $\overline{L}_{g,n}$ in the characteristic sublink are disjoint from $L_i,L_{i+1}$, so $\sum_{j \neq i,i+1}lk(L_i,L_j)w_j = 0$. Since $p_i = \lambda([d],[d])+1$ and $p_{i+1} = \lambda([d],[d])-1$, we conclude that $w_i \equiv w_{i+1} \equiv \lambda([d],[d]) + 1 \pmod{2}$. Therefore, $L_i$ and $L_{i+1}$ are in the characteristic sublink if and only if $\lambda([d],[d])$ is even.
    \end{enumerate}
\end{constr}

\begin{figure}
\centering
\includegraphics[scale=0.5]{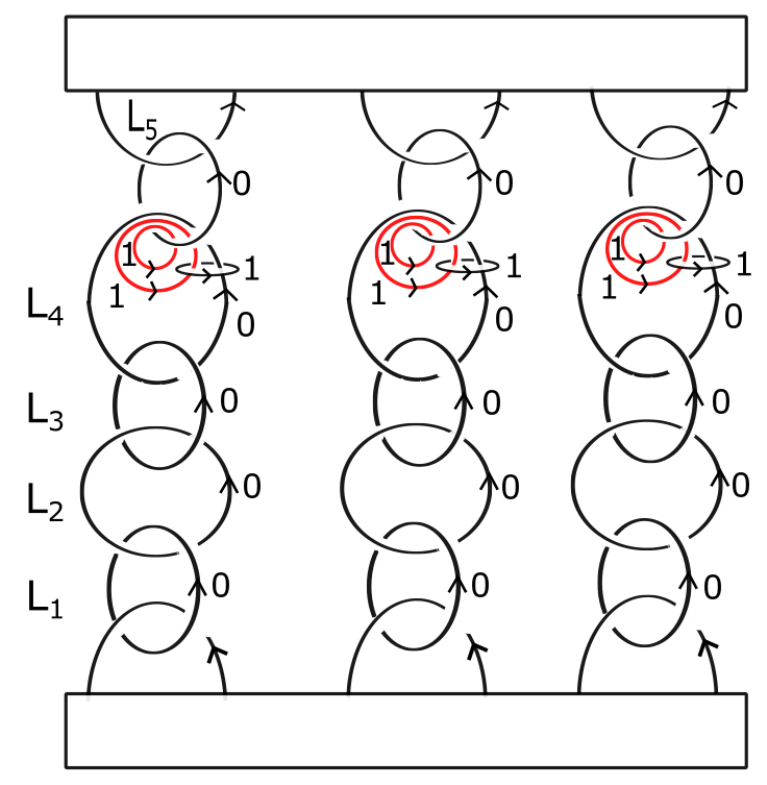}
\caption{The framed link $\overline{L}_{3,3}$, the coloured components give the characteristic sublink.}
\label{bcfiguretwo}
\end{figure}

To show that the Birman--Craggs maps are homomorphisms, we must first analyse the linking matrix of the sublink $\overline{L}_{g,n}$ obtained in Construction \ref{hom_sphere_constr}.

\begin{lemma} \label{L_bar_signature}
    The signature of the linking matrix of $\overline{L}_{g,n}$, using the orientation convention of Figure \ref{bcfiguretwo}, is $2g$.
\end{lemma}
\begin{proof}
    Note that $\overline{L}_{g,n}$ is the disjoint union of $g$ copies of $\overline{L}_{1,n}$; c.f. Figure \ref{bcfiguretwo}. In the orientation convention of Figure \ref{bcfiguretwo}, one can show, using Sylvester's law of inertia, that $$\Lambda_{\overline{L}_{1,n}} = \mathrm{Sign}\begin{pmatrix}
        0 & -1 \\
        -1 & 0
    \end{pmatrix} + \Lambda_{\overline{L}_{1,n-1}}.$$
    For example, add the first row to the third, then add the first column to the third in the linking matrix of $\overline{L}_{1,n}$ in the ordering of Figure \ref{bcfiguretwo}. Since $\Lambda_{\overline{L}_{1,1}} = \Lambda_{\overline{L}_{1,2}} = 2$, and $\mathrm{Sign}\begin{pmatrix}
        0 & -1 \\
        -1 & 0
    \end{pmatrix} = 0$, we conclude that $\Lambda_{\overline{L}_{1,n}} = 2$, therefore $\Lambda_{\overline{L}_{g,n}} = 2g$.
\end{proof}

\begin{theorem}
\label{bcishom}
\autocite[Theorem 8]{birmancraggs}
Let $g \geq 3$. Let $\mu: \mathcal{I}_{g,1} \rightarrow \mathbb{Z}/2$ be given by $k \mapsto R(L,C)$, where $(L,C)$ is obtained from Construction \ref{hom_sphere_constr} applied to $k$, and $R(L,C)$ is the formula given in Theorem \ref{kirby_melvin_rochlin_formula}. Then $\mu$ is a well--defined homomorphism.
\end{theorem}
\begin{proof}
To show that $\mu$ is well--defined, note that for any two factorisations of $k$ into bounding pairs, the manifolds obtained from Construction \ref{hom_sphere_constr} are diffeomorphic by Lemma \ref{composingtangles}. There is only one spin structure, so the corresponding values of $R(L,C)$ must be equal modulo $16$, by Theorem \ref{kirby_melvin_rochlin_formula}. 

To show that $\mu$ is a homomorphism, we analyse the pair $(L,C)$ obtained from Construction \ref{hom_sphere_constr}. Additivity of the $8\Arf(C)$ terms in formula (\ref{combinatorialrochlininv}) follows from the components corresponding to the bounding pairs in Construction \ref{hom_sphere_constr} being disjoint from each other.

To deal with the $\Lambda_L - C \cdot C$ terms in formula (\ref{combinatorialrochlininv}), we orient a pair of components $L_i$ and $L_{i+1}$ corresponding to a bounding pair factor $t_dt_e^{-1}$, as in Construction \ref{hom_sphere_constr} (3), oppositely on the Heegaard surface. Fix the orientation conventions for the components $\overline{L}_{g,n}$ obtained from Construction \ref{hom_sphere_constr} (2) as in Figure \ref{bcfiguretwo}. The linking matrix for the closed components in the tangle diagram for $V(k)$ obtained from Construction \ref{hom_sphere_constr} can be written as 
\begin{equation*}
R=
\begin{pmatrix}
    \overline{A} & \dots & \dots & 0  & A_{1,1}  & A_{2,1} & \vdots & A_{n-1,1} \\
    0 & \ddots & \vdots & 0 & A_{1,2}  & A_{2,2} & \vdots & A_{n-1,2}\\
    0 & \dots & \ddots & \vdots & \vdots & \vdots & \vdots & \vdots \\
    0 & 0 & \dots  & \overline{A} & A_{1,g} & A_{2,g} & \dots & A_{n-1,g}  \\
    A_{1,1}' & A_{1,2}' & \dots &  A_{1,g}' & A_1 & 0 & \dots & 0 \\
    A_{2,1}' & A_{2,2}' & \dots &  A_{2,g}' & 0 & A_2 & \dots & 0 \\
    \vdots & \vdots & \vdots & \vdots & 0 & 0 & \ddots & 0 \\
    A_{n-1,1}' & A_{n-1,2}' & \dots & A_{n-1,g}' & 0 & 0 & \dots & A_{n-1}
\end{pmatrix}.
\end{equation*}
The top diagonal is the linking matrix of $\overline{L}_{g,n}$, which is the direct sum of $g$ copies of the linking matrix $\overline{A}$ of $\overline{L}_{1,n}$. Note that $\overline{L}_{1,n}$ has $2(n-1)+4$ components. The $A_i$ are the linking matrices of a pair of components corresponding to a bounding pair factor $f_i$ in Construction \ref{hom_sphere_constr}. Suppose that $f_i = t_dt_e^{-1}$, then 
\begin{equation*}A_i = 
    \begin{pmatrix}
        \lambda([d],[d]) + 1 & -\lambda([d],[d]) \\
        -\lambda([d],[d]) & \lambda([d],[d]) -1
    \end{pmatrix},
\end{equation*}
in our orientation convention. The $A_{i,j}$ are $(2(n-1)+4) \times 2$ matrices corresponding to how the pair of components coming from $f_i$ link with the components of $\overline{L}_{g,n}$. In our orientation convention, the $A_{i,j}$ have identical columns but with opposite signs. The $A_{i,j}'$ are the transpose of the $A_{i,j}$, so they have identical rows but with opposite signs.

Recall that if $E$ is an elementary matrix for a row operation, then $ERE^T$ is obtained from the matrix $R$ by simultaneous row and column operations. Sylvester's law of inertia states that the signatures of $R$ and $ERE^T$ are equal. We repeatedly apply these operations to get a matrix of the form
\begin{equation*}
R'=
\begin{bmatrix}
    \overline{A} & \dots & \dots & 0 & 0 & 0 & \dots & 0 \\
    0 & \ddots & \dots & 0 & 0 & 0 & \dots & 0 \\
    0 & \dots & \ddots & 0 & 0 & 0 & \dots & 0 \\
    0 & 0 & \dots  & \overline{A} & 0 & 0 & \dots  & 0 \\
    0 & \dots & \dots & 0 & A_1' & 0 & \dots & 0 \\
    0 & 0 & \dots & 0 & 0 & A_2' & \dots & 0 \\
    0 & 0 & \dots & 0 & 0 & 0 & \ddots &  0 \\
    0 & 0 & \dots & 0 & 0 & 0 & 0 & A_n'
\end{bmatrix}.
\end{equation*}
Here the signature of the matrices $A_i'$ are the same as the signature of the matrices $A_i$. Since the signature of any matrix of the form $A_i$ is zero, we get that $\Lambda_L = 2g$ by Lemma \ref{L_bar_signature}. The only components of $C$ that contribute to the $C \cdot C$ terms are the ones contained in $\overline{L}_{g,n}$. So we have $\Lambda_L-C \cdot C = 2g-2g = 0$.
\end{proof}

\subsection{Evaluation on separating twists} \label{sep_eval_subsection}
Now we obtain formulas for evaluating Sato's maps on elements of the Torelli group. This relates Sato's maps to the Birman--Craggs maps.

We need the following evaluation of $\mu(h,-)$, due to Johnson. Here $h$ is the inclusion, so we think of the Heegaard surface as being in $S^3$.
\begin{theorem}
\label{johnsonsformula}
\autocite[Theorem 1]{BCJpaper}
Let $\gamma$ be a separating simple closed curve on the Heegaard surface, then
\begin{equation*}
    \mu(h, t_{\gamma}) = \Arf(\gamma).
\end{equation*}
Here $\Arf(\gamma)$ can be evaluated by taking the subsurface $S \subset \Sigma_g$ which $\gamma$ bounds and calculating the Arf invariant of the quadratic form induced by the Seifert pairing on the Heegaard surface.
\end{theorem}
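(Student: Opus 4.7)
The plan is to apply the combinatorial Rochlin invariant formula~(\ref{combinatorialrochlininv}) to a framed link presentation of $V(t_\gamma) = S(i_g \circ t_\gamma)$ obtained via the surgery algorithm of Section~$3$. Such a presentation gives a framed link $L = L_{i_g} \sqcup L_\gamma$ in $S^3$: here $L_{i_g}$ is the framed link corresponding to the factorisation $i_g = \prod_{j=1}^g t_{b_j}t_{a_j}t_{b_j}$, whose surgery returns $S^3 = S(i_g)$, and $L_\gamma$ is the single extra component obtained by placing $\gamma$ on a pushoff of the Heegaard surface, with framing $\lambda(\gamma,\gamma) \pm 1$ as in the Seifert pairing discussion of Section~$3$.

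The key step is to observe that $L_\gamma$ is actually split from $L_{i_g}$. Since $\gamma$ is separating it bounds a subsurface $S \subset \Sigma_g$, and hence $[\gamma] = 0 \in H_1(\Sigma_g;\mathbb{Z})$. Because the Seifert pairing factors through homology, $\lambda(\gamma,c)=0$ for every curve $c$ on the Heegaard surface; in particular, in the tangle diagram $\gamma$ has linking number zero with each Dehn-twist component of $L_{i_g}$. For the same reason, taking a normal pushoff $\gamma^+$ of $\gamma$, the subsurface $S$ serves as a Seifert surface for $\gamma$ in $S^3$ that is disjoint from $\gamma^+$, so $\lambda(\gamma,\gamma) = \mathrm{lk}(\gamma,\gamma^+) = 0$. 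Therefore the framing of $L_\gamma$ is $\pm 1$.

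With the split-link decomposition in hand, the Rochlin invariant is additive: $M_L \cong S^3 \# M_{L_\gamma}$, the characteristic sublink of $L$ decomposes as $C = C_{i_g} \sqcup \{\gamma\}$ (the odd framing forces $\gamma \in C$), and the signature, $C \cdot C$, and $\Arf(C)$ all split as sums. The $L_{i_g}$ block contributes $R(S^3) \equiv 0 \pmod{16}$, so formula~(\ref{combinatorialrochlininv}) gives
\begin{equation*}
R(V(t_\gamma)) \;\equiv\; (\pm 1) - (\pm 1) + 8\Arf(\gamma) \;\equiv\; 8\Arf(\gamma) \pmod{16}.
\end{equation*}
Dividing by $8$ and reducing mod $2$ yields $\mu(h,t_\gamma) = \Arf(\gamma)$. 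Finally, identifying the knot-theoretic $\Arf(\gamma)$ with the Arf invariant of the quadratic form induced by the Seifert pairing on $H_1(S)$ is immediate from the Seifert-surface description of $\Arf$ recalled in Section~$5$, using $S$ itself as a Seifert surface for $\gamma$ in $S^3$.

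The main technical point I expect to have to nail down is the split-link claim: one must check that in the tangle-diagram conventions of Section~$3$, the ambient linking numbers between $\gamma$ and each Dehn-twist component of $L_{i_g}$ really equal the corresponding Seifert pairing values on $\Sigma_g$, so that $[\gamma]=0$ in $H_1(\Sigma_g)$ genuinely forces a split link. Once this faithful encoding of $\lambda$ is granted, the argument reduces to the elementary split-link computation above.
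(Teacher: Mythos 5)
Your proof is correct, but note that the paper itself does not prove this statement: Theorem \ref{johnsonsformula} is quoted directly as Theorem 1 of Johnson's paper on the Birman--Craggs homomorphisms, so you have supplied an argument where the text gives only a citation. Your route --- present $V(t_\gamma)=S(i_g\circ t_\gamma)$ by the Section~3 algorithm, use $[\gamma]=0\in H_1(\Sigma_g;\mathbb{Z})$ to split $L_\gamma$ off from $L_{i_g}$ with framing $\lambda(\gamma,\gamma)\pm1=\pm1$, and read off $R\equiv 8\Arf(\gamma)$ from formula~(\ref{combinatorialrochlininv}) --- is essentially the computation the paper carries out in the mapping-torus setting (the proof of Corollary \ref{sepformula} and the remarks following it, where $R(M_{t_\gamma},\theta(\eta))=8\Arf(\gamma)$ is derived and $\pm1$-surgery along $\gamma$ is identified with the Heegaard regluing by $t_\gamma$), transplanted to the Heegaard-splitting model of Section~5.1. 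This is in the spirit of the paper's stated goal, since Johnson's original argument ultimately rests on the classical identification of the Rochlin invariant of $\pm1$-surgery on a knot with its Arf invariant, whereas yours needs only the combinatorial formula and Kirby calculus. The two points you flag as needing care are exactly the ones the paper already establishes and reuses: that the ambient linking numbers in the tangle diagram between $\gamma$ and the components of $L_{i_g}$ are values of the Seifert pairing (respectively algebraic intersection numbers with the canonical basis), hence vanish for null-homologous $\gamma$ (proof of Corollary \ref{sepformula}); and that the Arf invariant is additive under split unions of proper links, with the characteristic sublink of a homology-sphere presentation automatically proper (used in Lemma \ref{satoishom} and Theorem \ref{bcishom}). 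Granting those, your split-link computation closes the argument, and the final identification of the knot-theoretic $\Arf(\gamma)$ with the Arf invariant of the quadratic form $x\mapsto\lambda(x,x)\bmod 2$ on $H_1(S)$ is exactly the Seifert-surface description of the Arf invariant recalled in Section~5, applied to the subsurface $S$ bounded by $\gamma$.
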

Now we relate Sato's maps to the Birman--Craggs maps.
\begin{corollary}
\label{sepformula}
Let $\gamma$ be a separating simple closed curve in $\Sigma_g$, then 
\begin{equation*}
\beta_{\sigma, x}(t_{\gamma}) = 4(\Arf(\theta_{L_{t_{\gamma}}}(\sigma))-\Arf(\theta_{L_{t_{\gamma}}}(\sigma +x))).
\end{equation*}
\end{corollary}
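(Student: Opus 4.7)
The plan is to apply formula \eqref{satomapsnew}, which expresses
\[
    \beta_{\sigma,x}(t_\gamma) = \frac{\theta(\sigma+x)\cdot\theta(\sigma+x) - \theta(\sigma)\cdot\theta(\sigma)}{2} + 4(\Arf(\theta(\sigma)) - \Arf(\theta(\sigma+x))) \pmod{8},
\]
and to show that the self-pairing difference $\theta(\sigma+x)\cdot\theta(\sigma+x) - \theta(\sigma)\cdot\theta(\sigma)$ vanishes, leaving only the claimed Arf contribution.

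First I would construct a framed link diagram for $M_{t_\gamma}$ using the procedure of Section 3: to the base diagram for $M_{\mathrm{id}}$ adjoin one surgery component $L_\gamma$ on the fiber surface with framing $\lambda(\gamma,\gamma)+1$. The key geometric input is that $\gamma$ is separating, hence null-homologous in $\Sigma_g$. Under the correspondence between canonical basis curves in the fiber and meridians of the base components set up in Section 4.1, the algebraic intersection of $\gamma$ with every basis curve vanishes, and this forces $lk(L_\gamma, L_j) = 0$ for every base component $L_j$.

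Combining this with the observation that the base diagram for $M_{\mathrm{id}}$ has identically zero linking matrix among its own components (visible in the upper-left block of the matrix computed in the proof of Corollary \ref{maincor1}), the total linking matrix for $M_{t_\gamma}$ is block-diagonal: a zero block for the base plus a $1\times 1$ block $[\lambda(\gamma,\gamma)+1]$ for $L_\gamma$. Hence
\[
    \theta(\sigma) \cdot \theta(\sigma) = \delta_\gamma(\lambda(\gamma,\gamma)+1),
\]
where $\delta_\gamma \in \{0, 1\}$ records whether $L_\gamma$ lies in the characteristic sublink. Proposition \ref{ffu}, applied to the decoupled relation for $L_\gamma$ and supplemented by the explicit framed-curve construction of $\theta$ from Section 4.1, forces $\delta_\gamma$ to be independent of $\sigma$. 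Thus $\theta(\sigma) \cdot \theta(\sigma)$ is constant in $\sigma$, the self-pairing difference vanishes, and the corollary follows. The main obstacle is pinning down $\delta_\gamma$ independently of $\sigma$ in the degenerate case $\lambda(\gamma,\gamma)+1$ even, where Proposition \ref{ffu}'s relation for $\mu_\gamma$ reads $0 \equiv 0$ and the remaining freedom must be fixed by returning to the framed-curve definition of $\theta$.
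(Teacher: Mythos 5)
Your strategy is exactly the paper's: plug the surgery description into formula (\ref{satomapsnew}) and kill the self-pairing difference by showing the linking matrix is diagonal with the $L_\gamma$-entry the only nonzero one, so that $\theta(\sigma)\cdot\theta(\sigma)$ does not depend on $\sigma$. The one step you leave open --- pinning down $\delta_\gamma$ when $\lambda(\gamma,\gamma)\pm 1$ is even --- is not actually an obstacle, and the paper dispatches it in one line: the Seifert pairing is a bilinear form on $H_1(\Sigma_g;\mathbb{Z})$, and $\gamma$ separating means $[\gamma]=0$, so $\lambda(\gamma,\gamma)=\lambda(0,0)=0$ and the framing of $L_\gamma$ is $\pm 1$, which is odd. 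The relator of Proposition \ref{ffu} for $L_\gamma$ then reads $\pm 1\cdot\omega(\mu_\gamma)=\pm 1\pmod 2$ (all the linking terms vanish), forcing $\omega(\mu_\gamma)=1$ for every spin structure; i.e.\ $L_\gamma$ lies in every characteristic sublink and $\theta(\sigma)\cdot\theta(\sigma)$ is the constant $\pm 1$. So your ``degenerate case'' never arises, and there is no need to return to the framed-curve definition of $\theta$. With that observation inserted, your argument closes and coincides with the paper's proof. (A cosmetic remark: the paper uses framing $\lambda(\gamma,\gamma)-1$ for $t_\gamma$ where you write $\lambda(\gamma,\gamma)+1$; the sign convention is immaterial here since either choice is odd.)
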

\begin{proof}
Apply Construction \ref{constr_1} to $f=t_{\gamma}$ to get a framed link $L_{t_{\gamma}}$ for the mapping torus $M_{t_{\gamma}}$. Following Construction \ref{spin_constr1}, we can define a characteristic sublink $\theta_{L_{t_{\gamma}}}(\sigma)$. The method of proof of Theorem \ref{well_defined_sato} implies that the spin manifold $(L_{t_{\gamma}}, \theta_{L_{t_{\gamma}}}(\sigma))$ obtained this way is spin diffeomorphic to Construction \ref{spin_constr1} applied to any factorisation of $t_{\gamma}$ into squares of Dehn twists. We evaluate $\beta_{\sigma,x}(t_{\gamma})$ using Theorem \ref{kirby_melvin_rochlin_formula} applied to $(L_{t_{\gamma}}, \theta_{L_{t_{\gamma}}}(\sigma))$ and $(L_{t_{\gamma}}, \theta_{L_{t_{\gamma}}}(\sigma+x))$ as in Lemma \ref{satomapsnew}.

Let $L$ be the framed link for $S^1 \times \Sigma_g$ obtained from Construction \ref{constr_1} (3) with $f=\mathrm{id}$, then $L$ is a sublink of $L_{t_{\gamma}}$. Let $F$ be the fiber surface pictured in Figure \ref{vis_fibers}. The components of $L$ can all be isotoped in $S^3$ to the canonical basis for $H_1(F ; \mathbb{Z})$ in the fiber.
Let $L_i$ be the component of $L_{t_{\gamma}} - L$ that corresponds to the monodromy. Then the linking number of $L_i$ with any of the components of $L$ is determined by the Seifert pairing $\lambda$ of the surface $F \subset S^3$. Since $[\gamma]=0 \in H_1(F;\Z)$, $L_i$ has linking number $0$ with any component of $L$. This implies that the linking matrix of $L_{t_{\gamma}}$ is zero everywhere except for one entry on the main diagonal, which is the framing of the curve $L_i$. We compute that this framing is given by $\lambda(\gamma, \gamma) +1 = \lambda(0,0)+1=1$. For any spin structure $\sigma \in \Spin(\Sigma)$, using the relations of definition \ref{ffu} we have that 
\begin{equation*}
    w_i + 0 \equiv 1 \pmod{2}.
\end{equation*}
So $L_i$ is always in the characteristic sublink. This gives us that $\theta_{L_{t_{\gamma}}}(\sigma + x) \cdot \theta_{L_{t_{\gamma}}}(\sigma +x) - \theta_{L_{t_{\gamma}}}(\sigma) \cdot \theta_{L_{t_{\gamma}}}(\sigma) = 0$ always, so we have that 
\begin{equation*}
    \beta_{\sigma, x}(t_{\gamma}) = 4(\Arf(\theta_{L_{t_{\gamma}}}(\sigma))-\Arf(\theta_{L_{t_{\gamma}}}(\sigma +x))).
\end{equation*}
\end{proof}

The proof of Corollary \ref{sepformula} gives $\Lambda_{L_{t_{\gamma}}} - C \cdot C = 1-1 =0$, hence we obtain
\begin{equation*}
    R(M_{L_{t_{\gamma}}}, \theta_{L_{t_{\gamma}}}(\sigma)) = 8\Arf(\theta_{L_{t_{\gamma}}}(\sigma)) \pmod{16},
\end{equation*}
when $\gamma$ is separating. Let $\eta$ denote the spin structure on $\Sigma_g$ with $\theta_{L_{t_{\gamma}}}(\eta)$ containing none of the components of the link $L$ that represents $S^1 \times \Sigma_g$. We get 
\begin{equation*}
    R(M_{L_{t_{\gamma}}}, \theta_{L_{t_{\gamma}}}(\eta)) = 8\Arf(\gamma) \pmod{16},
\end{equation*}
the right side can be evaluated using the cut surface of $\gamma \subset F= \Sigma_{g,1}$ and the quadratic form on $H_1(F)$ induced by the Seifert pairing $\lambda$ of $F \hookrightarrow S^3$.

Note that for $[h] \in \Mod_{g,1}[2]$ and $[f] \in \Mod_{g,1}$, the diffeomorphism $\mathrm{id} \times f$ descends to a diffeomorphism $M_h \rightarrow M_{fhf^{-1}}$ of mapping tori. Under this diffeomorphism, the spin structure $\theta(\sigma)$ on $M_{fhf^{-1}}$ pulls back to the spin structure $\theta(f^*\sigma)$ on $M_h$. This implies that 
\begin{equation*}
    R(M_{fhf^{-1}}, \theta(\sigma)) = R(M_h, \theta(f^*\sigma)),
\end{equation*}
for all $[f] \in \Mod_{g,1}$ and $[h] \in \Mod_{g,1}[2]$ (in the notation of Section \ref{sato_overview_section}). 

Johnson proved that $\Spin(\Sigma_g)$ is in bijection with symplectic quadratic forms (see Theorem \ref{spin_strucures_quadratic_forms_bij}). Arf showed that two symplectic quadratic forms are in the same orbit under the symplectic group if and only if they have the same Arf invariant. So there are exactly two $\Mod_{g,1}$ orbits in $\Spin(\Sigma_g)$. Suppose we chose $x \in H^1(\Sigma_g)$ such that $\eta +x \in \Spin(\Sigma_g)$ is in the same orbit as $\eta$; write $\eta + x = f^*\eta$ for some $[f] \in \Mod_{g,1}$, then
\begin{equation*}
    \beta_{\sigma, x}(t_{\gamma}) = (R(M_{L_{t_{\gamma}}}, \theta_{L_{t_{\gamma}}}(\eta)) - R(M_{L_{t_{\gamma}}}, \theta_{L_{t_{\gamma}}}(f^*\eta)))/2 = (R(M_{L_{t_{\gamma}}}, \theta_{L_{t_{\gamma}}}(\eta)) - R(M_{L_{ft_{\gamma}f^{-1}}}, \theta_{L_{ft_{\gamma}f^{-1}}}(\eta)))/2,
\end{equation*}
where $ft_{\gamma}f^{-1} = t_{f(\gamma)}$, and $f(\gamma)$ is also a separating curve of the same genus on the fiber surface. So we have
\begin{equation*}
    \beta_{\sigma, x}(t_{\gamma}) = 4(\Arf(\gamma) - \Arf(f(\gamma))) \pmod{8}.
\end{equation*}
Note that surgery along $\gamma \subset \Sigma_{g,1}$ with framing $1$ is equivalent to cutting $S^3$ open along a Heegaard surface and regluing by $t_{\gamma}$. Since $t_{\gamma}$ acts trivially on homology, this gives a homology sphere. So this relates the Rochlin invariant of mapping tori to the Rochlin invariant of homology spheres, and to Johnson's description of the Birman-Craggs homomorphisms \autocite{BCJpaper}. In summary, if we combine Theorem \ref{johnsonsformula} and Corollary \ref{sepformula} with the discussion above, we get the following.
\begin{customcor}{2}
\label{maincor2}
Let $c$ be a separating curve on $\Sigma_{g,1}$. Let $\eta$ be the spin structure on $\Sigma_g$ with the characteristic sublink of $\theta_{L_{t_c}}(\eta)$ containing none of the components from the link $L$ that represents $S^1 \times \Sigma_g$. Suppose $\sigma = f^*(\eta)$ and $\sigma + x = h^*(\eta)$ for $[f],[h] \in \Mod_{g,1}$ , then
    \begin{equation*}
        \beta_{\sigma, x}(t_c) = \mu(t_{f(c)}) - \mu(t_{h(c)}) \pmod{2},
    \end{equation*} where $\mu$ denotes the Birman-Craggs homomorphism for the standard embedding $\Sigma_{g} \hookrightarrow S^3$.
\end{customcor}

\subsection{Evaluation on bounding pairs} \label{bp_eval_subsection}
A bounding pair is a pair of disjoint, homologous, non-separating simple closed curves $a,b$ on $\Sigma_{g,1}$, and the bounding pair map is $f = t_at_b^{-1}$. To calculate $\beta_{\sigma,x}(t_at_b^{-1})$, use Construction \ref{constr_1} with $f=t_at_b^{-1}$ to obtain a framed link $L_f$ for $M_f$. Apply the method of Construction \ref{spin_constr1} to obtain the pair $(L_f, \theta_{L_f}(\sigma))$ for any $\sigma \in \Spin(\Sigma_g)$. The manifold $(L_f, \theta_{L_f}(\sigma))$ obtained this way is spin diffeomorphic to any manifold obtained from Construction \ref{spin_constr1} applied to any factorisation of $f$ into squares of Dehn twists; c.f. Lemma \ref{satomapsnew}. Hence we can calculate $\beta_{\sigma, x}(f)$ by using Theorem \ref{kirby_melvin_rochlin_formula} to get
\begin{equation*}
    \beta_{\sigma, x}(f)=(\theta_{L_f}(\sigma + x) \cdot \theta_{L_f}(\sigma +x) - \theta_{L_f}(\sigma) \cdot \theta_{L_f}(\sigma) +8(\Arf(\theta_{L_f}(\sigma))-\Arf(\theta_{L_f}(\sigma +x))))/2 \pmod{8}.
\end{equation*}

Let $L$ be the sublink of $L_f$ corresponding to $S^1 \times \Sigma_g$, and let $F$ be a punctured fiber surface pictured as in Figure \ref{vis_fibers}. Suppose that $\lambda([a],[a]) = \lambda([b],[b]) = \lambda([a],[b]) = lk(a,b) = m$, where $\lambda(-,-)$ is the Seifert linking form for $F \subset S^3$. The framed link $L_f$ is obtained from $L$ by placing the curves $a,b$ in $F$, and framing them using the Seifert pairing, to get $L_i,L_{i+1}$. For any component $L_j$ of $L$, we have $lk(L_j,L_i) = lk(L_j,L_{i+1})$. To specify $(L_f, \theta_{L_f}(\sigma))$, we fix which components of $L$ are in the characteristic sublink, and use definition \ref{ffu} to find the full characteristic sublink. We have
\begin{equation*}
    (m+1) w_i + \sum_{j \neq i,i+1}lk(L_i,L_j)w_j + mw_{i+1} \equiv (m-1)w_{i+1} + \sum_{j \neq i,i+1}lk(L_i,L_j)w_j + mw_i \equiv m+1 \pmod{2},
\end{equation*}
which simplifies to 
\begin{equation}
    \label{boundingpairrelations}
    (m+1)w_i + mw_{i+1} = (m-1)w_{i+1} + mw_i \equiv m+1 - \sum_{j \neq i,i+1}lk(L_i,L_j)w_j \pmod{2}.
\end{equation}

We see that $L_i$ and $L_{i+1}$ are always either both in, or both out of any characteristic sublink of $L_f$. Now orient $L_i$ and $L_{i+1}$ oppositely in the fiber surface, this does not change the framings of $L_i, L_{i+1}$, but changes the sign of the linking number of $L_{i+1}$ with every other component. Then, up to the ordering of the components, the linking matrix for $L_f$ is
\begin{equation*}
\begin{bmatrix}
    0 & \dots & \dots & 0 & 0  & 0 \\
    0 & \dots & \dots & 0 & l_1  & -l_1 \\
    0 & \dots & \dots & 0 & l_2 & -l_2 \\
    0 & \ddots & \dots  & 0 & \vdots & \vdots \\
    0 & \dots & \dots & 0 & l_{2g} & -l_{2g} \\
    0 & l_1 & \dots & l_{2g} & m+1 & -m \\
    0 & -l_1 & \dots & -l_{2g} & -m & m-1 \\
\end{bmatrix}.
\end{equation*}

Suppose that $w_i \equiv w_{i+1} \equiv 1$, then since $\theta_{L_{f}}(\sigma) \cdot \theta_{L_{f}}(\sigma)$ is the sum of the entries in the linking matrix of the characteristic sublink specified by $\theta_{L_{f}}(\sigma)$, we have that 
\begin{equation*}
    \theta_{L_f}(\sigma) \cdot \theta_{L_f}(\sigma) = m+1 + m-1 - 2m = 0.
\end{equation*}
If $w_i \equiv w_{i+1} \equiv 0$, then $\theta_{L_f}(\sigma) \cdot \theta_{L_f}(\sigma) = 0$. These are all the cases, so we have 
\begin{equation*}
    \beta_{\sigma, x}(t_a t_b^{-1}) = 4(\Arf(\theta_{L_{t_at_b^{-1}}}(\sigma)) - \Arf(\theta_{L_{t_at_b^{-1}}}(\sigma +x))) \pmod{8}.
\end{equation*}

Let $\eta \in \Spin(\Sigma_g)$ denote the spin structure with the characteristic sublink of $\theta_{L_{t_at_b^{-1}}}(\eta)$ containing none of the components from the link $L$ for $S^1 \times \Sigma_g$. Using the relations of (\ref{boundingpairrelations}), we have that if $m = lk(a,b)$ is even then 
\begin{equation*}
    w_i \equiv w_{i+1} \equiv 1 \pmod{2},
\end{equation*}
so $L_i$ and $L_{i+1}$ are in the characteristic sublink, and we have 
\begin{equation*}
    R(M_{L_{t_at_b^{-1}}}, \theta_{L_{t_at_b^{-1}}}(\eta)) = \Lambda_{L_{t_at_b^{-1}}} + 8\Arf(a \cup b) = 8\Arf(a \cup b),
\end{equation*}
where $\Lambda_{L_{t_at_b^{-1}}}$ is the signature of the linking matrix given above. Here $\Arf(a \cup b)$ can be evaluated using the cut surface of $a \cup b \subset F = \Sigma_{g,1}$ and the quadratic form induced by the Seifert pairing $\lambda$.

If $m$ is odd then
\begin{equation*}
    mw_{i+1} = 0 \pmod{2},
\end{equation*}
so $L_i$ and $L_{i+1}$ are not in the characteristic sublink, and we have 
\begin{equation*}
    R(M_{L_{t_at_b^{-1}}}, \theta_{L_{t_at_b^{-1}}}(\eta)) = \Lambda_{L_{t_at_b^{-1}}} = 0.
\end{equation*}
For $a \cup b \subset \Sigma_{g,1}$, surgery with coefficients $m \pm 1$ is equivalent to cutting $S^3$ open along a Heegaard surface, and regluing by $t_at_b^{-1}$. Since $t_at_b^{-1}$ acts trivially on homology the resulting space is a homology sphere. So we obtain a relation between Rochlin invariants of a mapping tori and Rochlin invariants of homology spheres.

Now we relate Sato's maps on bounding pairs to the Birman--Craggs maps using the same method as in the case of separating curves. Suppose we have a spin structure $\sigma \in \Spin(\Sigma_g)$ and a class $x \in H^1(\Sigma_g)$ such that there exists mapping classes $[f],[h] \in \Mod_{g,1}$ with $\sigma = f^*(\eta)$ and $\sigma + x = h^*(\eta)$. We have, in the notation of Section \ref{sato_overview_section},
\begin{align*}
    \beta_{\sigma, x}(t_at_b^{-1}) = (R(M_{t_at_b^{-1}}, \theta(f^*(\eta))) - R(M_{t_at_b^{-1}}, \theta(h^*(\eta))))/2
    = (R(M_{t_{f(a)}t_{f(b)}^{-1}}, \theta(\eta))) - R(M_{t_{h(a)}t_{h(b)}^{-1}}, \theta(\eta)))/2.
\end{align*}
We also need the following calculation of the Birman--Craggs maps, due to Johnson; this calculation also follows from the proof of Theorem \ref{bcishom}.
\begin{lemma}\autocite[Theorem 1]{BCJpaper}
\label{bcbpformula}
Let $a,b$ be a pair of simple closed curves on $\Sigma_{g,1}$ that bound a subsurface, and let $\lambda: H_1(\Sigma_g ; \mathbb{Z})\times H_1(\Sigma_g ; \mathbb{Z}) \rightarrow \mathbb{Z}$ denote the Seifert linking form for the standard embedding $\Sigma_{g} \hookrightarrow S^3$. Then for the Birman-Craggs map $\mu: \mathcal{I}_{g,1} \rightarrow \mathbb{Z}/2$ corresponding to the embedding $\Sigma_{g} \hookrightarrow S^3$ we have that:
\begin{enumerate}
    \item $\mu(t_at_b^{-1}) = 0$, if $\lambda(a,a)$ is odd,
    \item $\mu(t_at_b^{-1}) = 8\Arf(a \cup b)$, if $\lambda(a,a)$ is even.
\end{enumerate}
\end{lemma}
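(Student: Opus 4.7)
The plan is to apply the combinatorial Rochlin formula~\eqref{combinatorialrochlininv} to the economical surgery description of $V(t_a t_b^{-1})$ already observed just above the statement: $V(t_a t_b^{-1})$ is obtained from $S^3$ by Dehn surgery along $a \cup b$ sitting on the standard Heegaard surface, with framing coefficients $\lambda(a,a)+1$ and $\lambda(b,b)-1$. Since $a$ and $b$ are homologous, write $m := \lambda(a,a) = \lambda(b,b) = \lambda(a,b)$; if I orient the two components oppositely, the linking matrix becomes
\begin{equation*}
A = \begin{bmatrix} m+1 & -m \\ -m & m-1 \end{bmatrix},
\end{equation*}
with determinant $-1$ and signature zero.

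Next I would identify the unique characteristic sublink $C$ using the parity relations~\eqref{linkingmatrixcond} (equivalently Proposition~\ref{ffu}). When $m$ is odd, both diagonal entries are even and both off-diagonals are odd, forcing $C = \emptyset$; when $m$ is even the parities flip and the only solution is $C = L_a\cup L_b$. Either way, a direct summation gives $C\cdot C = 0$, so
\begin{equation*}
\mu(V(t_a t_b^{-1})) \;=\; \Lambda_L - C\cdot C + 8\Arf(C) \;=\; 8\Arf(C) \pmod{16}.
\end{equation*}
If $m$ is odd, $\Arf(\emptyset) = 0$ and case (1) follows at once.

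For case (2), the remaining task is to identify the link Arf invariant $\Arf(L_a\cup L_b)$ with the surface Arf invariant $\Arf(a\cup b)$ appearing in the statement. Because $L_a, L_b$ are the natural pushoffs of $a,b$ on the Heegaard surface, a Seifert surface for the proper link $L_a\cup L_b \subset S^3$ can be taken to be the connected subsurface of $\Sigma_g$ that $a\cup b$ bounds, and the Seifert form on it computed from the link coincides with the restriction of the ambient Seifert pairing $\lambda$. By the Seifert-form definition of the Arf invariant of a proper link recalled in Section~4.2, this gives $\Arf(L_a\cup L_b) = \Arf(a\cup b)$, completing case (2).

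The anticipated main obstacle is purely bookkeeping: keeping the orientation conventions and parity relations in~\eqref{linkingmatrixcond} consistent enough that the $\Lambda_L - C\cdot C$ contribution really vanishes on the nose, and confirming that the Seifert surface identification in case (2) is compatible with the pushoff framings used to define the surgery on $a\cup b$. Once these compatibilities are checked, the whole argument is a three-line application of formula~\eqref{combinatorialrochlininv}, in the spirit of the signature cancellation already carried out in the proof of Theorem~\ref{bcishom}.
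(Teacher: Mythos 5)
Your computation is correct, and it verifies the lemma by essentially the same mechanism the paper has in mind: the paper does not write out a standalone proof of this statement (it is cited to Johnson, with the remark that it also follows from the proof of Theorem \ref{bcishom}), and the calculation embedded there is exactly yours --- the bounding pair is in the characteristic sublink iff $\lambda(a,a)$ is even, the $\Lambda_L - C\cdot C$ contribution vanishes, and what survives is $8\Arf(C)$. The one genuine difference is the choice of surgery presentation. You work directly with the two-component link $a\cup b$ on the standardly embedded Heegaard surface with framings $m+1$, $m-1$, where $\Lambda_L=0$ and $C\cdot C=0$ on the nose; the paper instead uses the full tangle-diagram model of $S(i_g\circ k)$, in which the extra components coming from the gluing word $i_g$ contribute $2g$ to both $\Lambda_L$ and $C\cdot C$, and a Sylvester's-law argument is needed to see the cancellation. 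Your route is more economical, at the cost of invoking the Lickorish-type identification of surgery on $a\cup b\subset\Sigma_g\subset S^3$ with the reglued Heegaard splitting --- which the paper does assert explicitly in Section $5.3$, so nothing is missing. Your parity analysis of the characteristic sublink and the identification $\Arf(L_a\cup L_b)=\Arf(a\cup b)$ via the bounded subsurface as Seifert surface both check out (and note that the link is proper precisely in the case where its Arf invariant is actually needed).
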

In summary, if we combine Lemma \ref{bcbpformula} with the calculations of this subsection, we get the following.
\begin{customcor}{3}
\label{maincor3}
Let $a,b$ be a pair of simple closed curves on $\Sigma_{g,1}$ that bound a subsurface. Let $\eta$ be the spin structure on $\Sigma_g$ with the characteristic sublink of $\theta_{L_{t_at_b^{-1}}}(\eta)$ containing none of the components from the link $L$ that represents $S^1 \times \Sigma_g$. If $\sigma = f^*(\eta)$ and $\sigma + x = h^*(\eta)$ for $[f],[h] \in \Mod_{g,1}$, then
\begin{equation*}
    \beta_{\sigma, x}(t_at_b^{-1}) = \mu_{\iota}(t_{f(a)}t_{f(b)}^{-1})-\mu_{\iota}(t_{h(a)}t_{h(b)}^{-1}) \pmod{2},
\end{equation*}
where $\mu_{\iota}$ denotes the Birman--Craggs map for the standard embedding $\iota:\Sigma_g \hookrightarrow S^3$. In particular, if $g \geq 3$, then we have $\beta_{\sigma,x} = \mu_{\iota \circ f}-\mu_{\iota \circ h} \pmod{2}$.
\end{customcor}
\section{Relation to Meyer's signature cocycle} \label{meyer_cocycle_section}

In this section, we apply the methods developed above to study Meyer's signature cocycle restricted to $\Mod_{g,1}[2]$. The extension of the Birman--Craggs maps to $\Mod_{g,1}[2]$ no longer give homomorphisms, but we can find a relation between these extensions and Meyer's signature cocycle.

Let $P$ denote a pair of pants, that is, a sphere with three boundary components. Pick two based loops that run around one distinct boundary component each, call them $\alpha, \beta$, and identify $\pi_1(P)$ with the free group generated by $\alpha, \beta$. Let $[f], [h] \in \Mod_{g,1}$ be mapping classes, and consider the $\Sigma_g$--bundle $E_{f, h}$ over $P$ with monodromy $\rho: \pi_1(P) \rightarrow \Mod_{g,1}$ given by $\alpha \mapsto [f], \beta \mapsto [h]$. The diffeomorphism type of $E_{f, h}$ does not depend on the choice of representatives for the mapping classes $[f], [h]$. Furthermore, $E_{f,h}$ has a natural orientation coming from that of $\Sigma_g$ and $P$.

\textit{Meyer's signature cocycle} is defined by 
\begin{align*}
\tau_g: \Mod_{g,1} \times \Mod_{g,1} \rightarrow \Z \\
([f], [h]) \mapsto \mathrm{Sign}(E_{f, h}),
\end{align*}
where $\mathrm{Sign}(E_{f, h})$ denotes the signature of the $4$--manifold $E_{f,h}$ \cite{meyer}. Note that $\partial E_{f, h} = M_{f} \sqcup M_{h} \sqcup M_{(f \circ h)^{-1}}$. Now, Construction \ref{spin_constr1} and Theorem \ref{well_defined_sato} give us a well--defined map 
\begin{align*}
    R_{\sigma}: \Mod_{g,1}[2] \rightarrow \Z/16 \\
    [f] \mapsto R(M_{L_f}, \theta_{L_f}(\sigma)),
\end{align*}
where $R(M_{L_f}, \theta_{L_f}(\sigma))$ denotes the Rochlin invariant of the spin $3$--manifold $(M_{L_f}, \theta_{L_f}(\sigma))$ obtained from Construction \ref{spin_constr1}. Sato shows in \cite[Lemma 2.2]{sato} that for any $\sigma \in \Spin(\Sigma_g)$, there exists a spin structure on $E_{f, h}$ that spin bounds the mapping tori $M_{f} \sqcup M_{h} \sqcup M_{(f \circ h)^{-1}}$ with spin structure $\theta(\sigma)$ (in the notation of Section \ref{sato_overview_section}). Then we have
\begin{align*}
    R(M_{f}, \theta(\sigma))+R(M_{h}, \theta(\sigma)) - R(M_{f \circ h}, \theta(\sigma)) \equiv \mathrm{Sign}(E_{f, h}) \pmod{16},
\end{align*}
or, in the language of group cohomology $\tau_g \equiv \partial(R_{\sigma}) \pmod{16}$, for any $\sigma \in \Spin(\Sigma_g)$. 

In Construction \ref{spin_constr1}, we start with a framed link $L$ representing $S^1 \times \Sigma_g$. We have the monodromy $f$ written as a product of Dehn twists, and our method gives a framed link $L_f$ representing $M_f$, obtained by placing the curves appearing in the factorisation in pushoffs of the fiber surface in $S^3 \setminus L$, framed using the Seifert pairing; $L_f$ has $L$ as a sublink. Let $\eta$ denote the spin structure on $\Sigma_g$ such that the characteristic sublink of $\theta_{L_f}(\eta)$ contains none of the components from the framed link $L$. For any component added to our diagram of $S^1 \times \Sigma_g$ to modify the monodromy, the relations of definition \ref{ffu} determine whether this component is added to the characteristic sublink.

We relate Construction \ref{spin_constr1} to the Birman--Craggs map for the standard inclusion $\iota: \Sigma_g \hookrightarrow S^3$. Recall Johnson's definition of this map
\begin{align*}
    \mu_{\iota}: \Mod_{g,1}[2] \rightarrow \Z/16 \\
    [f] \mapsto R(M(\iota , f)),
\end{align*}
where $R(M(\iota, f))$ is the Rochlin invariant of the manifold $M(\iota, f)$, obtained by cutting $S^3$ along $\iota(\Sigma_g)$ into two handlebodies, and regluing them along their boundaries by the map $f$. Since $[f] \in \Mod_{g,1}[2]$, the manifold $M(\iota, f)$ is a $\Z/2$--homology sphere, so it has a unique spin structure.

Let $\widetilde{L}_f = L_f \setminus L$ denote the framed link given by the union of the components corresponding to the monodromy $f$. The fiber surface in our diagram for $S^1 \times \Sigma_g$ lies in $S^3$ as the standard embedding $\iota: \Sigma_{g,1} \hookrightarrow S^3$ by Figure \ref{vis_fibers}. Surgery along the framed link $\widetilde{L}_f$ is equivalent to cutting $S^3$ along this Heegaard surface, and regluing via $[f] \in \Mod_{g,1}[2]$, which gives the $\Z/2$--homology sphere $M(\iota, f)$. We begin relating $R_{\eta}$ and $\mu_{\iota}$ with the following Lemma.

\begin{lemma} \label{bc_extension_char_sublink}
The characteristic sublink of $\widetilde{L}_f$ for $M(\iota, f)$ coincides with the characteristic sublink for $(L_f, \theta_{L_f}(\eta))$ obtained from Construction \ref{spin_constr1}, where $\eta$ is the spin structure of $\Sigma_g$ defined above. For a pair of components $L_i$ and $L_{i+1}$ in $\widetilde{L}_f$ that corresponds to a factor of the form $t_a^{\pm 2}$ appearing in $f$, we have that $L_i$ and $L_{i+1}$ are in the characteristic sublink of $\widetilde{L}_f$ if and only if $\lambda([a],[a])$ is even. Here, $\lambda$ denotes the Seifert pairing for $\iota: \Sigma_g \hookrightarrow S^3$.
\end{lemma}
\begin{proof} Take a pair of components $L_i, L_{i+1}$ in $\widetilde{L}_f$, that corresponds to factor of the form $t_a^{\pm 2}$, or $t_at_b^{-1}$ for a bounding pair $a,b$, in the monodromy $f$. Using Construction \ref{spin_constr1} to find the characteristic sublink for our chosen $\eta \in \Spin(\Sigma_g)$, we set $w_j=0$ if $L_j$ is a component from the link $L$ representing $S^1 \times \Sigma_g$. Using definition \ref{ffu}, and that $L_i$ and $L_{i+1}$ have the same linking number with other components up to sign, we get
\begin{align*}
    (m+1)w_i + mw_{i+1} \equiv mw_i + (m+1)w_{i+1} \\
    \equiv m+1 - \sum_{j \neq i,i+1}lk(L_i, L_j)w_j \pmod{2},
\end{align*}
where $m = \lambda([a],[a])$ and $\lambda$ is the Seifert pairing for the punctured fiber surface in $S^3$. This implies that $w_i \equiv w_{i+1} \equiv m+1 - \sum_{j \neq i,i+1}lk(L_i,L_j)w_j \pmod{2}$. The other components of $\widetilde{L}_f$ come in pairs corresponding to factors of the form $t_c^{\pm 2}$, $t_{d_1}t_{d_2}^{-1}$ appearing in $f$. Since any such pair is either both in, or both out of the characteristic sublink by above, we get that $\sum_{j \neq i,i+1}lk(L_i,L_j)w_j = 0 \pmod{2}$, hence $w_i \equiv w_{i+1} \equiv m+1$. So $L_i$ and $L_{i+1}$ are in the characteristic sublink if and only if $m$ is even.
\end{proof}

There exists a cobordism $Y_f$ between $M_{\widetilde{L}_f}$ and $M_{L_f}$, given in the following way: take $M_{\widetilde{L}_f} \times D^1$ and attach $2$--handles to $M_{\widetilde{L}_f} \times \{1\}$ along the framed link $L$ to obtain $Y_f$. Then $\partial Y_f = M_{\widetilde{L}_f} \sqcup M_{L_f}$. So the boundary of $Y_f$ is the union of $M(\iota,f)$ and the mapping torus $M_f$. Let $W_{L_f}$ be the $2$--handlebody specified by $L_f$, with boundary $M_{L_f}$, and define $W_{\widetilde{L}_f}$ similarly. Then $W_{L_f}$ is the union of $W_{\widetilde{L}_f}$ and $Y_f$ along $M_{\widetilde{L}_f}$, since we can identify $W_{\widetilde{L}_f} \cup_{M_{\widetilde{L}_f}} (M_{\widetilde{L}_f} \times D^1)$ with $W_{\widetilde{L}_f}$ by thinking of $M_{\widetilde{L}_f} \times D^1$ as a collar of the boundary. 

Recall that for the framed link $L_f$, the term $\Lambda_{L_f}$ in the formula of Theorem \ref{kirby_melvin_rochlin_formula} is also the signature of the intersection form of the $2$--handlebody $W_{L_f}$ specified by $L_f$. Then, using Novikov additivity and Lemma \ref{bc_extension_char_sublink}, we have
\begin{align*}
    R(M_{L_f}, \theta_{L_f}(\eta)) = \mathrm{Sign}(W_{L_f}) - \theta_{L_f}(\eta) \cdot \theta_{L_f}(\eta) + 8\Arf(\theta_{L_f}(\eta)) \\
    = \mathrm{Sign}(Y_f) + \mathrm{Sign}(W_{\widetilde{L}_f}) - \theta_{L_f}(\eta) \cdot \theta_{L_f}(\eta) + 8\Arf(\theta_{L_f}(\eta)) \\
    = \mathrm{Sign}(Y_f) + R(M(\iota,f)).
\end{align*}

In summary, we have shown:
\begin{customcor}{4}
    \label{meyer--birman--craggs}
    Define the map 
    \begin{align*}
    \alpha_{\iota}: \Mod_{g,1}[2] \rightarrow \Z/16 \\
    f \mapsto \mathrm{Sign}(Y_f),
    \end{align*}
    where $Y_f$ is the cobordism described above. Then we have $R_{\eta}(f) = \alpha_{\iota}(f) + \mu_{\iota}(f)$, where $\mu_{\iota}:\Mod_{g,1}[2] \rightarrow \Z/16$ denotes the extension of the Birman--Craggs map for the standard embedding $\iota: \Sigma_g \hookrightarrow S^3$. Consequently, $\alpha_{\iota}$ is well--defined, and $\tau_g \equiv \partial(\alpha_{\iota} + \mu_{\iota}) \pmod{16}$.
\end{customcor} 
We finish with a formula for evaluating $\mu_{\iota}$ on an element of $\Mod_{g,1}[2]$. We use the following construction to evaluate $\mu_{\iota}$:
\begin{constr} \label{bc_extension_constr}
Let $[f] = t_{c_n}^{\epsilon_n} \cdots t_{c_1}^{\epsilon_1} \in \Mod_{g,1}[2]$, where $\epsilon_i = \pm 2$ for all $i$. We evaluate $R(M(\iota, f))$ in the following way:
    \begin{enumerate}
        \item Start with the standard Heegaard embedding $\Sigma_g = \partial(\nu ( \Delta_g))$ as in Section \ref{surgery_diagram_construction_section}. The unit normal to $\Sigma_g$ in $S^3$ that points out of the page, defines an embedding of $I \times \Sigma_g$ in $S^3$. Pick $t_1<\cdots < t_n \in I$ and place two parallel copies of $c_i$ in $\{ t_i \} \times \Sigma_g$, both with framing $\lambda([c_i],[c_i]) + \epsilon_i/2$ to obtain $\widetilde{L}_f$. Here, $\lambda$ denotes the Seifert pairing for $\Sigma_g$.
        \item Use Lemma \ref{bc_extension_char_sublink} to find the unique characteristic sublink $C$ of $\widetilde{L}_f$. Then orient each pair of components corresponding to $t_{c_i}^{\epsilon_i}$ oppositely on the Heegaard surface.
        \item Evaluate $R(\widetilde{L}_f, C)$ using Theorem \ref{kirby_melvin_rochlin_formula}.
    \end{enumerate}
\end{constr}
\begin{theorem} \label{bc_extension_formula}
    Let $[f] = t_{c_n}^{\epsilon_n} \cdots t_{c_1}^{\epsilon_1} \in \Mod_{g,1}[2]$, where each $c_i$ is a simple closed curve, and $\epsilon_i = \pm 2$ for all $i=1,..,n$. Then $$\mu_{\iota}([f]) = \Lambda_{\widetilde{L}_f} - \sum_{\lambda([c_i],[c_i]) \equiv 0 \pmod{2}} \epsilon_i \pmod{16},$$
    where $\widetilde{L}_f$ is obtained from Construction \ref{bc_extension_constr}, and $\lambda:H_1(\Sigma_g;\Z) \times H_1(\Sigma_g;\Z) \rightarrow \Z$ denotes the Seifert pairing for the standard inclusion $\iota: \Sigma_g \hookrightarrow S^3$.
\end{theorem}
\begin{proof}
Using Lemma \ref{bc_extension_char_sublink}, we find the unique characteristic sublink of $\widetilde{L}_f$ to be the union of each pair of components corresponding to $t_{c_i}^{\epsilon_i}$ with $\lambda([c_i],[c_i])$ even. For a pair of components $L_i$ and $L_{i+1}$ of this form, note that $L_i$ is isotopic to $L_{i+1}$ ambiently in the Heegaard surface in $S^3$. Since $L_i$ and $L_{i+1}$ are oriented oppositely, a similar argument to the proof of Lemma \ref{satoishom} implies that $L_i$ and $L_{i+1}$ contribute $\epsilon_i$ to the $C \cdot C$ terms in Theorem \ref{kirby_melvin_rochlin_formula}, and that $\Arf(C)=0$ always.
\end{proof}

\printbibliography[heading=subbibliography]
\end{document}